
\documentclass[10pt]{article}


\usepackage{mathptmx,amsmath,amscd,amssymb,amsthm,xspace}
\usepackage[all,tips]{xy}
\usepackage[dvips]{graphicx}
\usepackage{graphics,epsfig,wrapfig,verbatim,syntonly}
\usepackage{hyperref,amssymb,color, url,fancyhdr}
\usepackage{verbatim}
\usepackage{syntonly}
\usepackage{hyperref}
\usepackage{tikz}
\usepackage{amsfonts, amsmath, amsopn, amssymb, latexsym, graphics, graphicx, enumerate,hyperref}
\usepackage{changepage}



\theoremstyle{plain}
\newtheorem{thm}{Theorem}[section]

\newtheorem{prop}[thm]{Proposition}
\newtheorem{lemma}[thm]{Lemma}

\theoremstyle{definition}



\DeclareMathOperator{\Isom}{Isom}




\newcommand{\wt}{\widetilde}



\newcommand{\innp}[1]{\left< #1 \right>}
\newcommand{\abs}[1]{\left\vert#1\right\vert}

\newcommand{\set}[1]{\left\{#1\right\}}

\newcommand{\su}{\subset}


\newcommand{\lra}{\longrightarrow}


\newcommand{\B}[1]{\ensuremath{\mathbf{#1}}}
\newcommand{\BB}[1]{\ensuremath{\mathbb{#1}}}
\newcommand{\Cal}[1]{\ensuremath{\mathcal{#1}}}
\newcommand{\Fr}[1]{\ensuremath{\mathfrak{#1}}}


\newcommand{\N}{\ensuremath{\BB{N}}}
\newcommand{\Q}{\ensuremath{\BB{Q}}}
\newcommand{\R}{\ensuremath{\BB{R}}}
\newcommand{\Z}{\ensuremath{\BB{Z}}}

\newcommand{\F}{\ensuremath{\BB{F}}}

\usepackage{fancyhdr}


\let\oldmarginpar\marginpar
\renewcommand\marginpar[1]{\-\oldmarginpar[\raggedleft\footnotesize #1]%
{\raggedright\footnotesize #1}}


\setlength{\topmargin}{-1.cm} \setlength{\headsep}{1.6cm}
\setlength{\evensidemargin}{0.48cm} \setlength{\oddsidemargin}{0.48cm}
\setlength{\textheight}{21.5cm}
\setlength{\textwidth}{15.5cm}
\setlength{\parskip}{6pt}
\setlength{\parindent}{0pt}


\begin{document}


\title{\textbf{Constructing Geometrically \\ Equivalent Hyperbolic Orbifolds}}
\author{D.~B.~McReynolds\thanks{Purdue University, West Lafayette, IN. E-mail \tt{dmcreyno@purdue.edu}},~~Jeffrey S. Meyer\thanks{University of Oklahoma, Norman, OK. E-mail: \tt{jmeyer@math.ou.edu}}, and Matthew Stover\thanks{Temple University, Philadelphia, PA. E-mail: \tt{mstover@temple.edu}}}
\maketitle


\begin{abstract}
\noindent In this paper, we construct families of nonisometric hyperbolic orbifolds that contain the same isometry classes of nonflat totally geodesic subspaces. The main tool is a variant of the well-known Sunada method for constructing length-isospectral Riemannian manifolds that handles totally geodesic submanifolds of multiple codimensions simultaneously. 
\end{abstract}

\section{Introduction}

Classical spectra like the eigenvalue spectrum of the Laplace--Beltrami operator or the primitive geodesic length spectrum have played an important role in dynamics, geometry, and representation theory. In this paper, we continue the investigation of higher dimensional spectra that encode the geometry of the nonflat totally geodesic submanifolds of a fixed complete, finite volume, Riemannian manifold $M$. We will refer to the set of such submanifolds, counted with multiplicity, as the \textbf{geometric spectrum}.

To construct our examples, we restrict ourselves to closed arithmetic locally symmetric orbifolds, where recent work shows that the geometric spectrum, when nonempty, carries much information. In \cite{McR}, it was shown that if $M_1,M_2$ are arithmetic hyperbolic 3--manifolds with the same geometric spectrum, provided the geometric spectrum is nonempty, then $M_1$ and $M_2$ are commensurable. For higher dimensions, \cite[Thm C]{Meyer} proved that if $M_1$ and $M_2$ are standard arithmetic hyperbolic $m$--manifolds (see \S \ref{section:overview}) with the same geometric spectrum, then $M_1$ and $M_2$ are commensurable. It is well-known that the geometric spectrum of a standard arithmetic hyperbolic $m$--manifold is nonempty with representatives in every possible proper codimension.

For any finite volume, hyperbolic $3$--manifold $M$, there exist infinitely many pairs of nonisometric finite covers $(M_j,N_j)$ of $M$ such that $M_j, N_j$ have the same totally geodesic surfaces \cite{McR}. This has two parts. First, there are infinitely many pairs of finite covers $\{M_j',N_j'\}$ with the same geometric spectrum. It is a feature of this construction that $\mathrm{Vol}(M_j') = \mathrm{Vol}(N_j')$, though we know no general reason why that must hold. Secondly, there exist infinitely many pairs $\{M_j, N_j\}$ with the same set of totally geodesic surfaces (i.e., without multiplicity) such that $\mathrm{Vol}(M_j)/\mathrm{Vol}(N_j)$ is unbounded.

The main result of this article is the generalization of the above covering constructions to higher dimensional hyperbolic manifolds. We utilize a variant of the well-known Sunada method for producing length-isospectral Riemannian manifolds that allows one to handle totally geodesic submanifolds of varying codimensions. The case of totally geodesic subsurfaces of a hyperbolic $3$--manifold is handled by \cite{McR}, and the challenge we overcome is to care for all codimensions simultaneously. 

Define the \textbf{totally geodesic spectrum} of a locally symmetric Riemannian orbifold $M$ to be the set
\begin{equation}\label{defqtgm1}
\mathcal{TG}(M)=\left\{ \parbox{2.6in}{\begin{center}Isometry classes of orientable nonflat finite volume totally geodesic subspaces $X \subset M$ with multiplicity $m_X$ \end{center}}\right\} = \set{(X_j,m_{X_j})}.
\end{equation}
We say that $M_1$ and $M_2$ are \textbf{geometrically isospectral} if $\mathcal{TG}(M_1)=\mathcal{TG}(M_2)$. The \textbf{totally geodesic set} of a locally symmetric, Riemannian orbifold is
\begin{equation}\label{defqtgm}
\mathrm{TG}(M)=\left\{ \parbox{2.5in}{\begin{center}Isometry classes of orientable nonflat finite volume totally geodesic subspaces $X \subset M$ \end{center}}\right\} = \set{(X_j)}.
\end{equation}
We say that $M_1,M_2$ are \textbf{geometrically equivalent} if $\mathrm{TG}(M_1) = \mathrm{TG}(M_2)$. 

\begin{thm}\label{thmA}
For every commensurability class $\mathcal{C}$ of closed arithmetic hyperbolic $m$--orbifolds with $m\ge 3$, we have the following:
\begin{itemize}
\item[(a)] For each $M \in \mathcal{C}$, there exist nonisometric finite covers $M',N'$ of $M$ such that $\mathcal{TG}(M') = \mathcal{TG}(N')$.
\item[(b)] For each $M \in \mathcal{C}$, there exist infinitely many pairs of nonisometric, finite covers $(M_j,N_j)$ of $M$ such that:
\begin{itemize}
\item[(i)] For all $j$, $\mathrm{TG}(M_j) = \mathrm{TG}(N_j)$.
\item[(ii)] The ratio $\mathrm{Vol}(M_j) / \mathrm{Vol}(N_j)$ is unbounded.
\end{itemize}
\end{itemize}
\end{thm}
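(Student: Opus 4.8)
The plan is to build everything from a single group-theoretic mechanism in the spirit of Sunada, but adapted so that the ``distinguishing data'' is not a conjugacy class in a finite quotient but the entire collection of commensurability-and-embedding invariants carried by totally geodesic subspaces. Fix $M \in \mathcal{C}$, write $M = \Gamma \backslash \Hy^m$ with $\Gamma$ arithmetic, and pass to a finite-index torsion-free normal subgroup so we may assume $M$ is a manifold; a totally geodesic subspace $X \subset M'$ of a finite cover $M'$ corresponds to a $\Gamma'$-conjugacy class of subgroups $\Delta < \Gamma'$ of the ``standard'' form (stabilizers of totally geodesic copies of $\Hy^k$), and the isometry type of $X$ is recovered from the abstract commensurability class of $\Delta$ together with its covolume, i.e.\ $[\Gamma' : N_{\Gamma'}(\Delta)\text{-data}]$. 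The key point, which I would extract as a lemma, is that for a finite group $G$ and subgroups $H, H' \le G$, if $H$ and $H'$ are \emph{Gassmann equivalent} (for every conjugacy class $c$ of $G$, $|H \cap c| = |H' \cap c|$), then for \emph{every} subgroup $L \le G$ the two permutation actions of $L$ on $G/H$ and on $G/H'$ have the same cycle type decomposition; consequently the two covers $M_H = (\text{preimage of }H)\backslash \Hy^m$ and $M_{H'}$ have, for each fixed commensurability class of standard $\Delta < \Gamma$, exactly the same multiset of covolumes of connected lifts of the corresponding totally geodesic subspace of $M$. Summing over the (finitely many, up to the relevant equivalence) totally geodesic subspaces of $M$ itself then gives $\mathcal{TG}(M_H) = \mathcal{TG}(M_{H'})$.

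Concretely, for part (a): choose a finite quotient $q : \Gamma \to G$ (which exists in abundance since $\Gamma$ is residually finite and, being arithmetic of dimension $\ge 3$, has many finite quotients — e.g.\ congruence quotients $\PSL$ or $\mathrm{SO}$ over finite fields) admitting a Gassmann triple $(G, H, H')$ with $H, H'$ \emph{not} conjugate in $G$; the standard source is $G = \mathrm{PSL}_n(\F_p)$ or $G = \mathrm{GL}_n(\F_p)$ with $H, H'$ the stabilizers of a line and a hyperplane, the classical ``arithmetically equivalent number fields'' example of Gassmann--Perlis, or Sunada's original $\mathrm{SL}_3(\F_2)$ example. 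Set $M' = q^{-1}(H) \backslash \Hy^m$ and $N' = q^{-1}(H')\backslash \Hy^m$. By the lemma, $\mathcal{TG}(M') = \mathcal{TG}(N')$; and one arranges $M' \not\cong N'$ either by a direct argument (nonconjugate $H, H'$ generically give nonisometric covers — using that the isometry group of $M$ is finite and pulling back, or simply by noting $[\Gamma : q^{-1}(H)] = [\Gamma : q^{-1}(H')]$ yet the pair is not related by an isometry for a generic choice of base $\Gamma$) or, more robustly, by first replacing $\Gamma$ by a further finite cover chosen so that $\mathrm{Isom}(M)$ is trivial, after which distinct subgroups up to conjugacy force nonisometric covers. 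The subtlety that ``geometric'' here includes multiplicity is exactly what the Gassmann (not merely ``same set of subgroups'') hypothesis is designed to handle, via the cycle-type statement above.

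For part (b) one needs the volume ratio to be unbounded, so Gassmann equivalence (which forces equal index, hence equal volume) is too strong; instead I would use the weaker relation that underlies the ``totally geodesic \emph{set}'' (no multiplicities): it suffices that $H$ and $H'$ have the property that a subgroup $L \le G$ fixes a point of $G/H$ if and only if it fixes a point of $G/H'$ — equivalently, $H$ and $H'$ contain conjugates of exactly the same subgroups of $G$ — without any condition matching the \emph{number} of fixed points. Such pairs $H, H'$ with wildly different indices exist: e.g.\ take $H' = G$ itself (index $1$) and $H$ a subgroup that still contains a conjugate of every cyclic subgroup of $G$ — a ``covering subgroup'' — which for suitable $G$ (again finite groups of Lie type work, by counting) can be taken of arbitrarily large index, or iterate a fixed such pair in a tower $G_j = G \times \cdots \times G$ to push $[\Gamma : H_j]/[\Gamma : H_j']$ to infinity. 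Applying the construction to each such $(G_j, H_j, H_j')$ and pulling back to $\Gamma$ yields $(M_j, N_j)$ with $\mathrm{TG}(M_j) = \mathrm{TG}(N_j)$ and $\mathrm{Vol}(M_j)/\mathrm{Vol}(N_j) \to \infty$; nonisometry again follows from a generic-base argument or by making $\mathrm{Isom}$ trivial first.

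The main obstacle I anticipate is \emph{not} the Sunada-type bookkeeping but the passage from ``the permutation actions of every subgroup $L \le G$ agree'' to ``$\mathcal{TG}$ agrees''. One must (i) know that every totally geodesic subspace of a finite cover $M'$ of $M$ arises, up to the action of $\Gamma$, from one of finitely many ``arithmetically defined'' totally geodesic subspaces of $M$ itself — this uses arithmeticity essentially, and for $\Hy^m$ is where standard/nonstandard subtleties enter, but since we work inside a fixed commensurability class one can invoke the structure theory (this is exactly the kind of input \cite{McR} and \cite{Meyer} supply in their settings, and the theorem statement restricts to arithmetic $M$); (ii) correctly identify, for a fixed such subspace $X_0 \subset M$ with stabilizer $\Delta_0 < \Gamma$, the connected totally geodesic subspaces of $M' = q^{-1}(H)\backslash\Hy^m$ lying over $X_0$ with the orbits of $q(\Delta_0)$ on $G/H$, and their covolumes with the corresponding orbit sizes (times the covolume of $X_0$) — a standard but careful double-coset computation; and (iii) check that the isometry type of the lift, not just its volume, is determined by this data, i.e.\ that two connected lifts of $X_0$ of equal covolume are isometric — true because they are commensurable (covers of $X_0$) of equal volume and, after a further harmless refinement making them manifolds, covers of $X_0$ of equal degree need not be isometric in general, so one in fact records the lift together with its covering data, which the cycle-type agreement matches precisely. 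Handling (iii) cleanly — essentially, making sure ``isometry class with multiplicity'' is a function of ``permutation action of $q(\Delta_0)$ on $G/H$'' — is the heart of the argument and where I would spend the most care.
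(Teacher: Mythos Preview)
Your proposal has a genuine gap at exactly the point you flag as obstacle (iii), and it propagates to both parts.

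For (a): Gassmann equivalence of $H,H'$ says the permutation characters of $G$ on $G/H$ and $G/H'$ agree, hence for any $L\le G$ the \emph{$L$-representations} $\mathbb{C}[G/H]|_L$ and $\mathbb{C}[G/H']|_L$ are isomorphic. It does \emph{not} say the underlying $L$-sets are isomorphic, i.e., it does not match the multiset of stabilisers $\{L\cap gHg^{-1}\}_{g\in L\backslash G/H}$ up to $L$-conjugacy. But the isometry type of a lift of a totally geodesic $X_0$ with $\pi_1(X_0)=\Delta_0$ is determined by $\Delta_0\cap\rho^{-1}(gHg^{-1})=\rho|_{\Delta_0}^{-1}(L\cap gHg^{-1})$ with $L=\rho(\Delta_0)$, so you need precisely the multiset of stabilisers, not just the character. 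This is why Sunada's method yields length-isospectral manifolds (cyclic $L$, where character determines the set) but has no reason to yield geometrically isospectral ones once $\dim X_0\ge 2$. Your parenthetical ``the cycle-type agreement matches precisely'' is the step that fails.

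For (b): your condition ``$L$ is conjugate into $H$ iff into $H'$'' is the right shape, but with $H'=G$ it forces every relevant $L$ to be conjugate into $H$. You then restrict to cyclic $L$ (``covering subgroup''), which only handles closed geodesics. For a codimension-one totally geodesic subgroup $\Lambda$, the image $\rho(\Lambda)$ is typically a large, highly non-cyclic subgroup of $G$ (e.g., close to $\mathbf{SO}(m,\mathbb{F}_{p^r})$ inside $\mathbf{SO}(m+1,\mathbb{F}_{p^r})$), and there is no proper $H$ containing conjugates of all such images.

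What the paper does instead is orthogonal to Gassmann. It takes $G=G_{\mathfrak p}=\mathbf{SO}(q)(\mathbb{F}_{p^r})$ via a congruence quotient and produces, by an explicit order calculation, a prime $\ell$ dividing $|G_{\mathfrak p}|$ but \emph{not} dividing $|H_{\mathfrak p}|$ for the image $H_{\mathfrak p}$ of \emph{any} totally geodesic subgroup $\Lambda$. This uniform bound on all $|H_{\mathfrak p}|$ is obtained from Bruhat--Tits theory (classifying the possible reductive quotients $\overline{\mathbf H}^{\mathrm{red}}$ over $\mathbb{F}_{p^r}$ and reading off their orders), and the existence of $\ell$ from Zsigmondy's theorem. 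One then takes $C_1=\{1\}$ and $C_\ell$ cyclic of order $\ell$: the condition $\rho(\Lambda)\cap C_\ell=\{1\}$ holds for \emph{every} totally geodesic $\Lambda$ simultaneously, which gives $\mathrm{TG}(M_1)=\mathrm{TG}(M_\ell)$ with volume ratio $\ell$; iterating over several primes (or over several $\ell$'s) gives the unbounded ratio in (b). Part (a) is then obtained not from a Gassmann pair but from two primes $\mathfrak p_1,\mathfrak p_2$ with isomorphic residue fields, comparing the covers for $C_\ell\times\{1\}$ and $\{1\}\times C_\ell$; equality of multiplicities comes from the fact that both are degree-$\ell$ cyclic covers of the common $C_\ell\times C_\ell$ cover in which every totally geodesic subspace lifts trivially. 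The Bruhat--Tits order computation, which does the work of controlling \emph{all} totally geodesic subgroups at once, is the technical heart and is entirely missing from your outline.
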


The orientability condition in \eqref{defqtgm} is a matter of taste, as a small modification of our methods allows for nonorientable geodesic subspaces. Our methods can produce examples modeled on other symmetric spaces of noncompact type, but the technicalities would obscure the basic ideas behind our construction, which is general enough to highlight the basic procedure (see Theorem \ref{thmA-Gen} for a generalization of Theorem \ref{thmA}). 

{\bf Acknowledgments}. The authors acknowledge support from U.S. National Science Foundation grants DMS 1107452, 1107263, 1107367 "RNMS: GEometric structures And Representation varieties" (the GEAR Network). The first author was supported by NSF grants DMS-1105710 and DMS-1408458. The third author was supported by the National Science Foundation under Grant Number NSF 1361000.

\section{Notation and Overview}\label{section:overview}

In this section, we outline the construction of the covers required to prove our main results. Before providing this outline, we briefly set some notation and terminology that will be used throughout the article.

\subsection{Preliminaries}\label{SS:Prelim}

A finite volume hyperbolic $m$--manifold $M$ is \textbf{arithmetic} if its fundamental group $\Gamma=\pi_1(M)$ has a commensurator $\mathrm{Comm(\Gamma)}=\{g\in \mathrm{Isom}^+(\mathbb{H}^m)\ | \ \Gamma, g\Gamma g^{-1} \mbox{ are commensurable}\}$ that is dense in $\mathrm{Isom}^+(\mathbb{H}^m)$ (see \cite[16.3.3]{Witte}). The subclass of these that exhibit the richest collections of totally geodesic submanifolds are the so-called standard arithmetic manifolds, which we now describe.

Throughout this paper, $k$ denotes a number field, $\mathcal{O}_k$ its ring of integers, and $q$ a nondegenerate quadratic form over $k$. For a prime ideal $\mathfrak{p}$ of $\mathcal{O}_k$, let $k_{\mathfrak{p}}$ denote the localization of $k$ at $\mathfrak{p}$ and $\Cal{O}_{\Fr{p}}$ is its ring of integers. Call $(k, q)$ an \textbf{admissible hyperbolic pair} when $k$ is totally real and $q$ is positive definite at all but one real place of $k$, at which it has signature $(m,1)$. Set $\mathbf{G}=\mathbf{SO}(q)$, fix a $k$--rational embedding $\iota\colon\mathbf{G}\to \mathbf{GL}_d$, and define $\mathbf{G}(\Cal{O}_k)= \iota^{-1}(\iota(\mathbf{G}(k)) \cap \mathbf{GL}_d(\Cal{O}_k))$. Since the $k$--isomorphism class of $\mathbf{G}$ is independent of the similarity class of $q$, we can assume that the matrix representative $\iota(q)$ for $q$ lies in $\mathbf{GL}_d(\Cal{O}_k)$.

An admissible hyperbolic pair gives rise to a commensurability class of $m$--dimensional hyperbolic orbifolds as follows. Restriction of scalars followed by the appropriate projection induces a map $\pi\colon\mathbf{G}(k)\to \mathbf{PSO}_0(m,1)$ with finite kernel, and we call the image $\Gamma_q = \pi(\mathbf{G}(\Cal{O}_k))$ a \textbf{principle arithmetic lattice} in $\mathbf{PSO}_0(m,1)$. As $\mathbf{PSO}_0(m,1) = \mathrm{Isom}^+(\mathbb{H}^m)$, the lattice $\Gamma_q$ is also the orbifold fundamental group of the orientable hyperbolic orbifold $M_{\Gamma_q}=\Gamma_q \backslash \mathbb{H}^m$.

We call hyperbolic manifolds commensurable with $M_{\Gamma_q}$ \textbf{standard arithmetic manifolds}, and emphasize that every even-dimensional arithmetic hyperbolic manifold is standard. However, when $m$ is odd, there are infinitely many commensurability classes of non-standard arithmetic lattices. See \cite{Meyer} for more details on parametrizing commensurability classes of arithmetic hyperbolic orbifolds.

For any lattice $\Gamma$ in $\mathbf{PSO}_0(m, 1)$, let $\wt{\Gamma}$ be the lift of $\Gamma$ to $\mathbf{SO}_0(m, 1)$. When $m$ is even, the groups $\mathbf{PSO}_0(m, 1)$, $\mathbf{SO}_0(m, 1)$ are isomorphic and so $\wt{\Gamma} \cong \Gamma$. When $m$ is odd, $\mathbf{SO}_0(m, 1)$ is a two-fold covering of $\mathbf{PSO}_0(m, 1)$, and hence we have a central exact sequence
\[
\xymatrix{1 \ar[r] & \mu_2 \ar[r] & \wt{\Gamma} \ar[r] & \Gamma \ar[r] & 1,}
\]
where $\mu_2$, the group of $2^{nd}$ roots of unity, is the center of $\mathbf{SO}_0(m, 1)$. If this exact sequence does not split, there is an index two subgroup of $\Gamma$ for which the associated sequence does split. In other words, possibly passing to an index two subgroup when $m$ is odd, we can assume that $\Gamma$ embeds as a lattice in $\mathbf{SO}_0(m, 1)$.

Associated with any totally geodesic embedding $f\colon \mathbb{H}^n \hookrightarrow \mathbb{H}^m$ is an injection
\[
f_* \colon \mathbf{PS}_0\left(\mathbf{O}(n, 1) \times \mathbf{O}(m - n)\right) \hookrightarrow \mathbf{PSO}_0(m, 1),
\]
and we will denote the image by $H_f$. Given a torsion-free lattice $\Gamma$ in $\mathbf{PO}_0(m, 1)$, proper, totally geodesic, finite volume submanifolds of $M_\Gamma = \Gamma \backslash \mathbb{H}^m$ are then associated with embeddings $f$ as above such that $\Gamma \cap H_f$ is a lattice in $H_f$. Notice that, while $M_\Gamma$ is an orientable manifold, a geodesic submanifold can be nonorientable. Moreover, the submanifold is oriented if and only if $\left(\Gamma \cap H_f \right) \subset f_*\left(\mathbf{P}_0\left(\mathbf{SO}(n, 1) \times \mathbf{SO}(m - n)\right) \right)$.

We now relate $\Gamma \cap H_f$ to the fundamental group of the geodesic submanifold. Let $N_\Lambda = \Lambda \backslash \mathbb{H}^n$ be an oriented totally geodesic submanifold of $M_\Gamma$ of dimension $n$. Then we have an injective homomorphism $\Lambda \to \Gamma$. Choosing a lifting of $N_\Lambda \to M_\Gamma$ to an embedding $f\colon \mathbb{H}^n \hookrightarrow \mathbb{H}^m$ of universal coverings, we see that $\Lambda$ is a subgroup of $\Gamma \cap H_f$. Assuming that $\Gamma$ lifts to $\mathbf{SO}_0(m, 1)$, we obtain in injective homomorphism $f_\star\colon \Lambda \to \mathbf{SO}(n, 1) \times \mathbf{SO}(m-n)$. The real Zariski closure of $f_\star(\Lambda)$ is then of the form $\mathbf{SO}_0(n, 1) \times H_\Lambda$ for some closed subgroup $H_\Lambda$ of $\mathbf{SO}(m-n)$.

As is well-known, an orientable finite volume totally geodesic subspace $N_\Lambda$ of $M_\Gamma$ is also arithmetic \cite[\S3]{Meyer}. Associated with $N_\Lambda$ is an $(n+1)$--dimensional quadratic subform $r$ of $q$ with orthogonal complement $t$ (i.e., $q$ is $k$--isometric to $r\oplus t$) such that the $k$--groups $\mathbf{H}_r=\mathbf{SO}(r)$, $\mathbf{H}_t=\mathbf{SO}(t)$, and $\mathbf{H}=\mathbf{H}_r\times \mathbf{H}_t$ satisfy 
\[
\mathbf{H}_r(\R)=\mathbf{SO}(n,1), \quad \mathbf{H}_t(\R)=\mathbf{SO}(m-n), \quad \mbox{and} \quad \wt{\Lambda}=f_\star(\Lambda)\subset \mathbf{H}(k).
\]
The semisimple $k$--group $\mathbf{H}$ is naturally a $k$--subgroup of $\mathbf{G}$. We call $\Lambda$ \textbf{a totally geodesic subgroup} of either $\Gamma$ or the lift $\wt{\Gamma}$ of $\Gamma$ to $\mathbf{G}(\Cal{O}_k)$; recall from above that $\Lambda$ is isomorphic to a subgroup of both $\Gamma$ and $\wt{\Gamma}$.

\subsection{Strategy of Proof: Geometrically equivalent}

We will find a finite group $G$, a surjective homomorphism $\rho\colon\Gamma\to G$, and two subgroups $C_1, C_2\subset G$ such that
\begin{equation}\label{SubgroupCondition}
\rho(\Lambda)\cap C_1=\rho(\Lambda)\cap C_2
\end{equation}
for all totally geodesic $\Lambda\subset \Gamma$. It then follows from covering space theory that the finite covers $M_1,M_2$ associated with $\Gamma_1=\rho^{-1}(C_1), \Gamma_2=\rho^{-1}(C_2)$ contain exactly the same totally geodesic submanifolds (see \cite[Lemma 4.1]{McR}). Thus, it suffices to find a map $\rho\colon\mathbf{G}(\Cal{O}_k)\to G$ such that $\mathrm{gcd}([\mathbf{G}(\mathcal{O}_K): \ker \pi \cap \mathbf{G}(\Cal{O}_k)],\abs{C_i})=1$ and \eqref{SubgroupCondition} holds. Let $S_0$ denote the set of nondyadic primes of $\Cal{O}_k$ not lying over a prime dividing the index $[\mathbf{G}(\mathcal{O}_K): \ker \pi \cap \mathbf{G}(\Cal{O}_k)]$. The candidates for $G$ and $\rho$ are the natural reduction maps $\rho_{\Fr{p}}\colon \mathbf{G}(\Cal{O}_k) \to \mathbf{G}(\Cal{O}_k/\Fr{p})$, 
where $\mathfrak{p}$ is a prime ideal of $\Cal{O}_k$. Set $\BB{F}_{p^r}=\Cal{O}_k/\Fr{p}$, where $|\Cal{O}_k/\Fr{p}|=p^r$. For a totally geodesic subgroup $\Lambda$, set $H_{\Fr{p}}=\rho_{\Fr{p}}(\wt{\Lambda})$, which sits inside of $\rho_{\Fr{p}}(\mathbf{G}(\Cal{O}_k))$. For our examples, $C_1$ will be the trivial subgroup and $C_\ell$ will be a cyclic group of prime order $\ell$ such that $\ell$ does not divide the order of $H_\Fr{p}$ for any totally geodesic subgroup. In that case, \eqref{SubgroupCondition} will be satisfied and the manifolds $M_1$ and $M_\ell$ associated with the pullbacks of $C_1$ and $C_\ell$ will be geometrically equivalent. Furthermore, notice that, since our covering has odd degree, nonorientable manifolds only lift to nonorientable manifolds, so $\mathrm{TG}(M_1)$, which only contains oriented submanifolds, indeed equals $\mathrm{TG}(M_\ell)$.

Finding the desired prime $\ell$ requires two main steps:

\begin{enumerate}
\item Compute $|\rho_{\Fr{p}}(\mathbf{G}(\Cal{O}_k))|$. This step uses structure theory of algebraic groups, basic Galois cohomology, and strong approximation. We obtain the diagram
\begin{equation}\label{E:Dia1}
\xymatrix{
	&& \widetilde{\mathbf{G}}(\Cal{O}_k)\ar[r] \ar@{->>}[d]_{\rho_p}	&\mathbf{G}(\Cal{O}_k)\ar[d]_{\rho_p}&& \\
1\ar[r]	&\mathbf{F}(\BB{F}_{p^r})\ar[r]& \widetilde{\mathbf{G}}(\BB{F}_{p^r})\ar[r] 	&\mathbf{G}(\BB{F}_{p^r})\ar[r]&H^1(\BB{F}_{p^r},\mathbf{F})\ar[r]&1, 
}
\end{equation}
where $\widetilde{\mathbf{G}}$ is the simply connected cover of $\mathbf{G}$ and $\mathbf{F}$ is a finite $\mathbb{F}_{p^r}$--group.

\item Determine all possible divisors of $|H_\Fr{p}|$. This step uses Bruhat--Tits theoretic computations associated with the diagram
\begin{equation}\label{E:Dia2}
\xymatrix{
\wt{\Lambda}\ar@{^{(}->}[r] \ar[d]_{\rho_p}	&\mathbf{H}(k_{\Fr{p}})\cap\mathbf{G}(\Cal{O}_{\Fr{p}})\ar@{->>}[d] \\
H_{\Fr{p}} 	\ar@{^{(}->}[r]				&\overline{\mathbf{H}}(\mathbb{F}_{p^r}),}
\end{equation}
where $\overline{\mathbf{H}}$ is a certain algebraic $\F_{p^r}$--group associated with $\mathbf{H}$. We know the right vertical map is surjective, and hence we can realize $H_{\Fr{p}}$ as a subgroup of $\overline{\mathbf{H}}(\mathbb{F}_{p^r})$. Recall that $k_{\Fr{p}}$ denotes the localization of $k$ at $\Fr{p}$ and $\Cal{O}_{\Fr{p}}$ is its ring of integers.
\end{enumerate}

Using the calculations for the orders of the groups $\rho_{\Fr{p}}(\mathbf{G}(\Cal{O}_k))$ and the subgroups $H_\Fr{p}$, we find the prime $\ell$ using Zsigmondy's Theorem \cite{Zsig}. 

\subsection{Strategy of Proof: Geometrically isospectral}

Following \cite{McR}, to produce geometrically isospectral manifolds we require two good primes $\Fr{p}_1,\Fr{p}_2$ where we can use the same prime $\ell$ for both $\Fr{p}_1$ and $\Fr{p}_2$ in the above construction. The key observation in using the two primes $\Fr{p}_1,\Fr{p}_2$ is that, since $M_1$ is a cyclic cover of degree $\ell$ to which every geodesic submanifold of $M_\ell$ has exactly $\ell$ distinct lifts, the geometric spectrum of the orbifolds satisfies
\begin{equation}\label{SpectrumRelationship}
\mathcal{TG}(M_1) = \set{(X,m_{X,1})} = \set{(X,\ell m_{X,\ell})},
\end{equation}
where $\mathcal{TG}(M_\ell) = \set{(X,m_{X,\ell})}$. The validity of \eqref{SpectrumRelationship} follows from the argument used in \cite[p.~178]{McR} to establish this for totally geodesic subsurfaces of a hyperbolic 3--manifold. That there exists a prime $\ell$ that satisfies the necessary properties for both $\Fr{p}_1$ and $\Fr{p}_2$ is a straightforward application of the Cebotarev Density Theorem. In particular, there is a positive density set of primes $\Fr{p}$ for which our methods apply.

\section{Step 1: Computing $\abs{\rho_{\Fr{p}}(\mathbf{G}(\Cal{O}_k))}$.}\label{section:orders}

For each $\mathfrak{p} \in S_0$, let $q_{\Fr{p}}$ denote the reduction of $q$ to $\mathcal{O}_k/\mathfrak{q} = \mathbb{F}_{p^r}$. We will say $q$ has a \textbf{good reduction} at $\Fr{p}$ if $q_{\Fr{p}}$ is nondegenerate and note that the subset $S_1 \su S_0$ where $q$ has good reduction is cofinite. For $\mathfrak{p} \in S_1$, set $G_{\mathfrak{p}}=SO(m+1; p^r)$ to be $\mathbb{F}_{p^r}$--points of $\mathbf{SO}(q_{\Fr{p}})$. Over a finite field, orthogonal groups are always quasi-split, and hence come in one of three types (see \cite[Table 1]{Ono} for the orders of these groups):

\begin{enumerate}
\item $B_{n,n}$, the only form of $B_n$, arises when $\dim q=2n+1$. It has order 
\begin{equation}\label{E:Bn}
\abs{SO(2n+1;p^r)} = p^{rn^2}\prod_{j=1}^{n}(p^{2rj}-1).
\end{equation}
\item $D_{n,n}$, the split form of $D_n$, arises when $\dim q=2n$ and $\mathrm{disc}\, q$ is a square in $\mathbb{F}_{p^r}$. It has order
\begin{equation}\label{E:DnSquare}
\abs{SO^+(2n; p^r)} = p^{rn(n-1)}(p^{rn}- 1)\prod_{j=1}^{n-1}(p^{2rj}-1).
\end{equation}
\item $D_{n,n-1}$, the nonsplit quasi-split form of $D_n$, arises when $\dim q=2n$ and $\mathrm{disc}\, q$ is not square in $\mathbb{F}_{p^r}$. It has order
\begin{equation}\label{E:DnNotSquare}
\abs{SO^-(2n; p^r)} = p^{rn(n-1)}(p^{rn}+1)\prod_{j=1}^{n-1}(p^{2rj}-1).
\end{equation}
\end{enumerate}

We have the exact sequence of algebraic $k$--groups (see \cite[\S2.3]{PR})
\[
1\lra \mathbf{\mu}_2\lra \mathbf{Spin}(q)\lra \mathbf{SO}(q)\lra 1,
\]
where $\mu_2$ is the cyclic group of order two. This sequence yields the exact sequence for $\mathbb{F}_{p^r}$--points
\[
1\lra \mu_2\lra \mathbf{Spin}(q)(\mathbb{F}_{p^r})\lra \mathbf{SO}(q)(\mathbb{F}_{p^r})\lra \mathbb{F}_{p^r}^\times / (\mathbb{F}_{p^r}^\times)^2 \lra 1.
\]
Strong approximation (see Lem.~1.1 and Thm.~2.3 in \cite{Ra}) gives us that $\rho_\mathfrak{p}\colon \mathbf{Spin}(q)(\Cal{O}_k)\to \mathbf{Spin}(q)(\mathbb{F}_{p^r})$ is surjective, and we obtain the commutative diagram:
\[
\xymatrix{
	&& \mathbf{Spin}(q)(\Cal{O}_k)\ar[r] \ar@{->>}[d]_{\rho_{\Fr{p}}}	& \mathbf{SO}(q)(\Cal{O}_k)\ar[d]^{\rho_{\Fr{p}}}&& \\
1\ar[r]	&\mu_2\ar[r]&\mathbf{Spin}(q)(\mathbb{F}_{p^r})\ar[r] 	&\mathbf{SO}(q)(\mathbb{F}_{p^r})\ar[r]&\mathbb{F}_{p^r}^\times / (\mathbb{F}_{p^r}^\times)^2\ar[r]&1 }
\]

Using this commutative diagram and noting that $|\mathbb{F}_{p^r}^\times / (\mathbb{F}_{p^r}^\times)^2|=2$, we obtain the following.

\begin{prop}\label{P:Bob}
The index $[G_{\mathfrak{p}} : \rho_{\Fr{p}}(\mathbf{G}(\Cal{O}_k))]$ is either one or two.
\end{prop}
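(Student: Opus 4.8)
The plan is to extract the claim directly from the two exact sequences already displayed before the statement. We have the commutative diagram with exact rows
\[
\xymatrix{
	&& \mathbf{Spin}(q)(\Cal{O}_k)\ar[r] \ar@{->>}[d]_{\rho_{\Fr{p}}}	& \mathbf{SO}(q)(\Cal{O}_k)\ar[d]^{\rho_{\Fr{p}}}&& \\
1\ar[r]	&\mu_2\ar[r]&\mathbf{Spin}(q)(\mathbb{F}_{p^r})\ar[r] 	&\mathbf{SO}(q)(\mathbb{F}_{p^r})\ar[r]^-{\delta}&\mathbb{F}_{p^r}^\times / (\mathbb{F}_{p^r}^\times)^2\ar[r]&1 }
\]
where the bottom row is the long exact sequence in Galois cohomology associated to $1\to\mu_2\to\mathbf{Spin}(q)\to\mathbf{SO}(q)\to 1$ evaluated over $\F_{p^r}$, using $H^1(\F_{p^r},\mu_2)\cong \F_{p^r}^\times/(\F_{p^r}^\times)^2$ and the fact (Lang's theorem) that $H^1(\F_{p^r},\mathbf{Spin}(q))$ is trivial, so the spinor norm map $\delta$ is surjective with kernel the image of $\mathbf{Spin}(q)(\F_{p^r})$. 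First I would record that $G_\Fr{p}=\mathbf{SO}(q_\Fr{p})(\F_{p^r})$ by definition, and that $\rho_\Fr{p}(\mathbf{G}(\Cal{O}_k))=\rho_\Fr{p}(\mathbf{SO}(q)(\Cal{O}_k))$ contains $\rho_\Fr{p}(\mathbf{Spin}(q)(\Cal{O}_k))$ composed into $\mathbf{SO}$, which by the strong approximation surjectivity on the $\mathbf{Spin}$ level equals all of the image of $\mathbf{Spin}(q)(\F_{p^r})$ in $\mathbf{SO}(q)(\F_{p^r})$, i.e.\ $\ker\delta$.

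Next I would chase the diagram: since $\rho_\Fr{p}(\mathbf{Spin}(q)(\Cal{O}_k))=\mathbf{Spin}(q)(\F_{p^r})$ and the left square commutes, the image of $\mathbf{Spin}(q)(\Cal{O}_k)$ in $\mathbf{SO}(q)(\F_{p^r})$ is exactly $\ker\delta$, which has index $|{\rm im}\,\delta|=|\F_{p^r}^\times/(\F_{p^r}^\times)^2|=2$ in $G_\Fr{p}$ (the index is $2$ because $\F_{p^r}$ is a finite field of odd characteristic, as $\Fr{p}\in S_0$ is nondyadic, so the squares form an index-two subgroup of $\F_{p^r}^\times$). Since $\rho_\Fr{p}(\mathbf{G}(\Cal{O}_k))$ is a subgroup of $G_\Fr{p}$ that contains this index-two subgroup $\ker\delta$, its index in $G_\Fr{p}$ is a divisor of $2$, hence either $1$ or $2$. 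This is the entire argument.

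There is really no serious obstacle here; the one point that deserves a sentence of care is the identification of the cokernel of $\rho_\Fr{p}$ on the $\mathbf{SO}$ level with (a subgroup of) the Galois cohomology group, i.e.\ making sure the bottom row as written is genuinely exact at $\mathbf{SO}(q)(\F_{p^r})$ with the connecting map landing in $\F_{p^r}^\times/(\F_{p^r}^\times)^2$ and being surjective there. This is exactly the cited input from \cite[\S2.3]{PR} together with vanishing of $H^1$ of simply connected groups over finite fields (Lang), and the strong approximation statement from \cite{Ra} that gives surjectivity of $\rho_\Fr{p}$ on $\mathbf{Spin}(q)(\Cal{O}_k)$; all of these are available to us. I would close by remarking that both cases genuinely occur, so the proposition is sharp, but this is not needed for the statement.

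\begin{proof}
Write $\Fr{p}\in S_1$ and recall $G_\Fr{p}=\mathbf{SO}(q_\Fr{p})(\F_{p^r})=\mathbf{SO}(q)(\F_{p^r})$. From the commutative diagram above with exact bottom row, the spinor norm $\delta\colon \mathbf{SO}(q)(\F_{p^r})\to \F_{p^r}^\times/(\F_{p^r}^\times)^2$ is surjective with kernel the image of $\mathbf{Spin}(q)(\F_{p^r})$. Since $\Fr{p}$ is nondyadic, $\F_{p^r}$ has odd characteristic and $|\F_{p^r}^\times/(\F_{p^r}^\times)^2|=2$, so $\ker\delta$ has index $2$ in $G_\Fr{p}$. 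Strong approximation (Lem.~1.1 and Thm.~2.3 of \cite{Ra}) gives that $\rho_\Fr{p}\colon\mathbf{Spin}(q)(\Cal{O}_k)\to\mathbf{Spin}(q)(\F_{p^r})$ is surjective; composing with $\mathbf{Spin}(q)(\F_{p^r})\to\mathbf{SO}(q)(\F_{p^r})$ and using commutativity of the left square, the image of $\mathbf{Spin}(q)(\Cal{O}_k)$ in $\mathbf{SO}(q)(\F_{p^r})$ equals $\ker\delta$. As the map $\mathbf{Spin}(q)(\Cal{O}_k)\to\mathbf{SO}(q)(\Cal{O}_k)$ has image contained in $\mathbf{G}(\Cal{O}_k)$, we conclude that $\ker\delta\subseteq \rho_\Fr{p}(\mathbf{G}(\Cal{O}_k))\subseteq G_\Fr{p}$. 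Hence $[G_\Fr{p}:\rho_\Fr{p}(\mathbf{G}(\Cal{O}_k))]$ divides $[G_\Fr{p}:\ker\delta]=2$, so it is either $1$ or $2$.
\end{proof}
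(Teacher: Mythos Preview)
Your argument is correct and is exactly the approach the paper takes: the paper simply states that the proposition follows from the displayed commutative diagram together with the fact that $|\F_{p^r}^\times/(\F_{p^r}^\times)^2|=2$, and your proof spells out that diagram chase in detail. There is nothing to add.
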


Proposition \ref{P:Bob} with the above list of group orders completes our calculation of $\abs{\rho_{\Fr{p}}(\mathbf{G}(\Cal{O}_k))}$.

\section{Step 2: Computing $\abs{H_{\Fr{p}}}$ for a totally geodesic $\wt{\Lambda}$.}\label{sec:Step2}

Our goal of this section is the computations of $\abs{H_{\Fr{p}}}$ for a generic totally geodesic $\wt{\Lambda}\subset\B{G}(\Cal{O}_k)$. We use the notation established in \S \ref{section:overview}. Let $\Fr{p}\in S_1$ and $\Cal{G}_{\Fr{p}}=\B{G}(\Cal{O}_{\Fr{p}})$ denote the parahoric of $\B{G}(k_{\Fr{p}})$ with pro--$p$ unipotent radical $\Cal{G}_{\Fr{p}}^+$. It follows that $\Cal{H}_{\Fr{p}}=\B{H}(k_{\Fr{p}})\cap \Cal{G}_{\Fr{p}}$ is a parahoric of $\B{H}(k_{\Fr{p}})$ containing $\wt{\Lambda}$, and $\Cal{H}_{\Fr{p}}^+=\Cal{G}_{\Fr{p}}^+\cap \Cal{H}_{\Fr{p}}$ is the pro--$p$ unipotent radical of $\Cal{H}_{\Fr{p}}$. Set $\overline{\mathbf{H}}$ to be the $\mathbb{F}_{p^r}$--group whose $\mathbb{F}_{p^r}$--points are $\Cal{H}_{\Fr{p}}/\Cal{H}_{\Fr{p}}^+$. We have the following commutative diagram where we know the right two vertical arrows are surjections by \cite[3.4.4]{T2}.\\
\centerline{\hfill
\xymatrix{
\wt{\Lambda}\ar@{^{(}->}[r] \ar[d]_{\rho_{\Fr{p}}}	&\Cal{H}_{\Fr{p}}\ar@{^{(}->}[r]\ar@{->>}[d] \ar[d]	&\Cal{G}_{\Fr{p}}\ar@{->>}[d] \\
H_{\Fr{p}} 	\ar@{^{(}->}[r]				&\overline{\mathbf{H}}(\mathbb{F}_{p^r})	\ar@{^{(}->}[r]				&SO(m+1, p^r) }\hfill}
It follows that $H_{\Fr{p}}$ is a subgroup of $\overline{\mathbf{H}}(\mathbb{F}_{p^r})$, which is in turn a subgroup of $SO(m+1, p^r)$. 

\subsection{A simplification} 

The group $\overline{\mathbf{H}}(\mathbb{F}_{p^r})$ fits into the following exact sequence
\begin{equation}\label{E:Seq1}
1 \lra \mathcal{R}_u(\overline{\mathbf{H}})(\mathbb{F}_{p^r}) \lra \overline{\mathbf{H}}(\mathbb{F}_{p^r})\lra \overline{\mathbf{H}}^{red}(\mathbb{F}_{p^r})\lra 1,
\end{equation}
where $\overline{\mathbf{H}}^{red}$ is a reductive group whose Dynkin diagram can be read off of local Dynkin diagrams. From \eqref{E:Seq1} we obtain
\begin{align}
\abs{\overline{\mathbf{H}}(\mathbb{F}_{p^r})} = \abs{\mathcal{R}_u(\overline{\mathbf{H}})(\mathbb{F}_{p^r})}\cdot\abs{\overline{\mathbf{H}}^{red}(\mathbb{F}_{p^r})}.
\end{align}
Therefore, computing $|\overline{\mathbf{H}}(\mathbb{F}_{p^r})|$ reduces to computing the size of unipotent $\mathbb{F}_{p^r}$--groups and the size of $\overline{\mathbf{H}}^{red}(\mathbb{F}_{p^r})$. We compute the former with the following proposition.

\begin{prop}\label{unip}
If $\mathbf{U}$ is a unipotent group over a finite field $\mathbb{F}_{p^r}$, then $|\mathbf{U}(\mathbb{F}_{p^r})|=p^s$ for some $s\in \Z_{\ge0}$. 
\end{prop}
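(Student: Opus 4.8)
The plan is to reduce to the structure theory of unipotent groups over a perfect field and then exploit a composition series whose successive quotients are vector groups. First I would recall that a unipotent algebraic group $\mathbf{U}$ over any field of characteristic $p>0$ is a $p$-group in the sense that every element of $\mathbf{U}(\overline{\mathbb{F}}_p)$ has order a power of $p$; over a \emph{perfect} field such as the finite field $\mathbb{F}_{p^r}$, $\mathbf{U}$ admits a central filtration
\[
\mathbf{U} = \mathbf{U}_0 \supset \mathbf{U}_1 \supset \cdots \supset \mathbf{U}_N = \{1\}
\]
by closed subgroups defined over $\mathbb{F}_{p^r}$, each normal in the next, with each successive quotient $\mathbf{U}_i/\mathbf{U}_{i+1}$ isomorphic (over $\mathbb{F}_{p^r}$) to a vector group $\mathbb{G}_a^{d_i}$. (One can take, for instance, the descending central series, or the filtration coming from the fact that a smooth connected unipotent group over a perfect field is split; the standard reference is Borel's \emph{Linear Algebraic Groups} or SGA3.) Since $\mathbb{F}_{p^r}$ is perfect, the Galois cohomology $H^1(\mathbb{F}_{p^r}, \mathbb{G}_a) = 0$, so taking $\mathbb{F}_{p^r}$-points is exact on the short exact sequences $1 \to \mathbf{U}_{i+1} \to \mathbf{U}_i \to \mathbb{G}_a^{d_i} \to 1$.

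The key steps, in order, are: (1) produce the filtration above with vector-group quotients, defined over $\mathbb{F}_{p^r}$; (2) observe that $\mathbb{G}_a^{d_i}(\mathbb{F}_{p^r}) = \mathbb{F}_{p^r}^{d_i}$ has order $p^{r d_i}$, a power of $p$; (3) use the vanishing of $H^1(\mathbb{F}_{p^r}, \mathbb{G}_a)$ (equivalently, the additive version of Hilbert 90, or just the fact that surjections of algebraic groups with vector-group kernel are surjective on points over a perfect field) to get $|\mathbf{U}_i(\mathbb{F}_{p^r})| = p^{r d_i} \cdot |\mathbf{U}_{i+1}(\mathbb{F}_{p^r})|$; (4) induct downward on $i$ to conclude $|\mathbf{U}(\mathbb{F}_{p^r})| = p^{r(d_0 + \cdots + d_{N-1})} = p^s$ with $s = r\sum_i d_i \in \mathbb{Z}_{\ge 0}$.

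The main obstacle is step (1): guaranteeing that the quotients in the filtration are honest vector groups \emph{and} that everything is defined over $\mathbb{F}_{p^r}$. In positive characteristic a general unipotent group need not be an iterated extension of $\mathbb{G}_a$'s over an imperfect field (there are wound unipotent groups), so one genuinely needs perfectness of the finite field here; the cleanest route is to invoke the theorem that a smooth connected unipotent group over a perfect field is split, i.e.\ admits exactly such a filtration by $\mathbb{F}_{p^r}$-subgroups with $\mathbb{G}_a$ quotients (see Borel, \emph{Linear Algebraic Groups}, Ch.~V, or Conrad--Gabber--Prasad, \emph{Pseudo-reductive Groups}, App.~B). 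If $\mathbf{U}$ is not assumed smooth or connected, one first notes $\mathbf{U}/\mathbf{U}^\circ$ is a finite unipotent group, hence a $p$-group, handling the component group, and $\mathbf{U}^\circ_{\mathrm{red}}$ is smooth connected unipotent, so the general case follows. Once the splitness of unipotent groups over perfect fields is in hand, the remaining steps are routine counting.

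Alternatively, if one prefers a more hands-on argument avoiding the structure theory: embed $\mathbf{U}$ as a closed subgroup of the strictly upper triangular matrices $\mathbf{N}_d \subset \mathbf{GL}_d$ over $\mathbb{F}_{p^r}$ (possible since $\mathbf{U}$ is unipotent), so $\mathbf{U}(\mathbb{F}_{p^r})$ is a subgroup of $\mathbf{N}_d(\mathbb{F}_{p^r})$, which has order $p^{r\binom{d}{2}}$, a $p$-group; then $|\mathbf{U}(\mathbb{F}_{p^r})|$ divides $p^{r\binom{d}{2}}$ and is therefore a power of $p$. This version is shorter and entirely elementary, and is probably the one I would actually include, relegating the filtration discussion to a remark.
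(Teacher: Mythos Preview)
Your main approach---split filtration by $\mathbb{G}_a$'s over the perfect field $\mathbb{F}_{p^r}$, then induct using vanishing of $H^1$---is exactly the paper's argument, with the same reference to Borel. One slip: exactness of $\mathbb{F}_{p^r}$-points on $1 \to \mathbf{U}_{i+1} \to \mathbf{U}_i \to \mathbb{G}_a^{d_i} \to 1$ requires $H^1(\mathbb{F}_{p^r}, \mathbf{U}_{i+1}) = 0$, i.e.\ vanishing for the \emph{kernel}, not for the quotient $\mathbb{G}_a$; your parenthetical about ``vector-group kernel'' has kernel and quotient reversed. The paper handles this with Lang's theorem (any connected algebraic group over a finite field has trivial $H^1$), which is the clean fix and also covers the smoothness/connectedness issues you flag.

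Your alternative---embed $\mathbf{U}$ as a closed subgroup of strictly upper-triangular matrices over $\mathbb{F}_{p^r}$ so that $\mathbf{U}(\mathbb{F}_{p^r})$ sits inside a finite $p$-group---is correct and genuinely shorter than what the paper does, bypassing both the splitness theorem and any cohomology. Since the only use of the proposition in the paper is that the order is a $p$-power (so as to be coprime to the Zsigmondy prime $\ell$), your alternative would serve just as well and is the more economical choice.
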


\begin{proof}
Since $\mathbb{F}_{p^r}$ is perfect, $\mathbf{U}$ splits \cite[15.5(ii)]{B}. Therefore $\mathbf{U}$ admits a composition series
\[
\mathbf{U}=\mathbf{U}_0\supset \mathbf{U}_1 \supset \mathbf{U}_2\cdots \supset \mathbf{U}_s = \{1\}
\] 
of connected $\mathbb{F}_{p^r}$--groups such that $\mathbf{U}_i/\mathbf{U}_{i+1}$ is $\mathbb{F}_{p^r}$--isomorphic to $\mathbf{G}_a$. Since each $\mathbf{U}_{i+1}$ is connected, by Lang's theorem \cite[6.1]{PR} $H^1(\mathbb{F}_{p^r},\mathbf{U}_{i+1})$ is trivial, and hence 
\[
1 \lra \mathbf{U}_{i+1}(\mathbb{F}_{p^r}) \lra \mathbf{U}_i(\mathbb{F}_{p^r}) \lra \mathbf{G}_a(\mathbb{F}_{p^r}) \lra 1
\] 
is exact. We proceed by inducting on the length of the composition series. If the series has length $0$, then $\mathbf{U}\cong \mathbf{G}_a$, and hence $|\mathbf{U}(\mathbb{F}_{p^r})|=p^r$. If the statement is true for series of length $j$, then the above exact sequence implies it follows for series of length $j+1$, and the result follows.
\end{proof}

\subsection{Computing $|\overline{\mathbf{H}}^{red}(\mathbb{F}_{p^r})|$.}

We are now left computing the orders of $\overline{\mathbf{H}}^{red}(\mathbb{F}_{p^r})$. To do so, we use the classification of local indices \cite{T2}. A $\Fr{p}$--adic group $\mathbf{H}$ is called \textbf{residually split} if $\mathrm{rank}_{k_{\Fr{p}}}(\mathbf{H})=\mathrm{rank}_{k_{\Fr{p}}^{un}}(\mathbf{H})$ where $k_{\Fr{p}}^{un}$ is the maximal unramified extension of $k_{\Fr{p}}$. The classification of local Dynkin diagrams of simple $k_{\Fr{p}}$--groups falls into two classes, residually split and not residually split. As we explain later, we can restrict ourselves to computing these orders for totally geodesic groups of ``maximal dimension'' for both $\overline{\mathbf{H}}_r^{red}(\mathbb{F}_{p^r})$ and $\overline{\mathbf{H}}_t^{red}(\mathbb{F}_{p^r})$.

\begin{prop}\label{comp1}
Continuing the notation of the earlier sections, suppose $\mathbf{H}_0=\mathbf{SO}(q_0)$ for some quadratic subform $q_0\subset q$ of odd dimension $2n-1\ge 4$ and let $\Fr{p}\subset S_1$. Then $|\overline{\mathbf{H}}_0^{red}(\mathbb{F}_{p^r})|$ divides $p^XY$ where $X\in \Z_{\ge 0}$ and $Y$ is one of the following:
\begin{enumerate}[\qquad (T1)]
\item $ \prod_{j=1}^{n-1}(p^{2rj}-1) $,
\item $ (p^{2r}-1)^2\ \prod_{j=1}^{n-3}(p^{2rj}-1)$,
\item $(p^{r(k-1)}\pm 1)\left(\prod_{j=1}^{k-2}(p^{2rj}-1)\right) \left(\prod_{j=1}^{n-k}(p^{2rj}-1)\right) $ for $3\le k\le n-3$,
\item $ (p^{2r}-1) (p^{r(n-2)}\pm 1)\prod_{j=1}^{n-3}(p^{2rj}-1)$,
\item $ (p^{r(n-1)}\pm 1)\prod_{j=1}^{n-2}(p^{2rj}-1)$,
\item $\prod_{j=1}^{n-2}(p^{2rj}-1)$,
\item $(p^{2r}-1)\ \prod_{j=1}^{n-3}(p^{2rj}-1)$,
\item $\left(\prod_{j=1}^{k-1}(p^{2rj}-1)\right)\ \left(\prod_{j=1}^{n-k-1}(p^{2rj}-1)\right) $ for $3\le k\le n-3$.
\end{enumerate}
\end{prop}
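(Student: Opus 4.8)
The plan is to reduce the computation of $|\overline{\mathbf{H}}_0^{red}(\mathbb{F}_{p^r})|$ to a finite inspection of the Bruhat--Tits local indices for $\mathbf{H}_0 = \mathbf{SO}(q_0)$ over the local field $k_{\Fr{p}}$, where $q_0$ has odd dimension $2n-1$, so that $\mathbf{H}_0$ is an inner form of type $B_{n-1}$. First I would recall that for $\Fr{p} \in S_1$ the parahoric $\mathcal{H}_{\Fr{p}} = \mathbf{H}_0(k_{\Fr{p}}) \cap \mathcal{G}_{\Fr{p}}$ has reductive quotient $\overline{\mathbf{H}}_0^{red}$ whose Dynkin diagram is read off the local Dynkin diagram of $\mathbf{H}_0$ over $k_{\Fr{p}}$, exactly as set up in \S\ref{sec:Step2}. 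Since $q_0 \subset q$ and $q$ has good reduction at $\Fr{p}$, the possibilities for $q_0$ over $k_{\Fr{p}}$ are controlled by its Witt index and discriminant: an odd-dimensional quadratic form over a local field is determined up to similarity by its discriminant and Hasse invariant, and correspondingly $\mathbf{SO}(q_0)$ over $k_{\Fr{p}}$ is either split (residually split, type $B_{n-1,n-1}$) or one of the non-residually-split forms arising from ramified or unramified quadratic data. Enumerating the vertices of the local Dynkin diagram in each case gives the relative root system of $\overline{\mathbf{H}}_0^{red}$, hence its type and rank over $\mathbb{F}_{p^r}$.

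Next I would organize the cases. The order of a reductive group over $\mathbb{F}_{p^r}$ is $p^{X} \prod_i (p^{r d_i} - \epsilon_i)$ for the appropriate degrees $d_i$ and signs $\epsilon_i$ determined by the type and the Galois twist, so once the type of $\overline{\mathbf{H}}_0^{red}$ is pinned down in each local case, the listed expressions (T1)--(T8) are just the product formulas for the semisimple part, with $p^X$ absorbing all unipotent-radical contributions (which are pure powers of $p$ by Proposition \ref{unip}) and all torus contributions that are powers of $p$. The split case $B_{n-1,n-1}$ yields (T1) via \eqref{E:Bn}; the cases where the reductive quotient picks up a $D$-type factor of rank $k$ (split or non-split, hence the $\pm$) with a complementary $B$ or $D$ factor yield the ``split into two factors'' expressions (T3), (T4), (T5), (T8); and the remaining small-rank degenerations, where a factor collapses to $A_1 \cong \mathbf{SL}_2$ contributing $(p^{2r}-1)$ or to a torus, give (T2), (T6), (T7). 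The bound $2n-1 \ge 4$, i.e.\ $n \ge 3$, and the ranges $3 \le k \le n-3$ are there precisely to ensure the factors that appear are honestly semisimple groups of the stated types and that no further coincidences occur; the boundary values of $k$ and the low-rank anomalies collapse into the separately listed items.

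The main obstacle is the bookkeeping of Bruhat--Tits local indices: correctly identifying, for each isomorphism class of $\mathbf{SO}(q_0)$ over $k_{\Fr{p}}$ and each vertex (or orbit of vertices under the residual Galois action) of its local Dynkin diagram, which reductive $\mathbb{F}_{p^r}$-group arises as $\overline{\mathbf{H}}_0^{red}$ — in particular distinguishing the split versus quasi-split $D_k$ quotients, which is the source of every $\pm$ sign. This requires carefully consulting the tables in \cite{T2} for type $B_{n-1}$ and its ramified forms, and checking that the residue-field data (square or non-square discriminant over $\mathbb{F}_{p^r}$, and ramified versus unramified quadratic extensions) match up with the square/non-square alternatives recorded in Step 1. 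Once each local index is translated into a type-and-rank statement, the order formula is a mechanical substitution into the standard finite-group-of-Lie-type order polynomials, and collecting all power-of-$p$ factors into the single exponent $X$ finishes the proof; I would present the case analysis as a short table rather than writing out each computation in full.
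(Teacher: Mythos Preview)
Your proposal is correct and follows essentially the same route as the paper: reduce to maximal parahorics via Bruhat--Tits theory, split into the residually split and non-residually-split $B_{n-1}$ cases, delete one vertex from each local Dynkin diagram to read off the Killing--Cartan type of $\overline{\mathbf{H}}_0^{red}$, and then plug into the standard order formulas (with Proposition~\ref{unip} absorbing the unipotent part into $p^X$). The paper presents this as two diagrams and two tables exactly as you anticipate; the only small slip in your sketch is the attribution of (T6)--(T8), which in fact arise from the non-residually-split case with reductive quotients $B_{n-2}$, $A_1\times B_{n-3}$, and $B_{k-1}\times B_{n-k-1}$ (no $D$-factors or torus collapses there).
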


\begin{proof}${}$
Since every parahoric lies in a maximal one it suffices to compute the orders of all possible reductions of maximal parahorics. We analyze all possible local indices of $\mathbf{H}$ and remove one vertex to obtain the Dynkin diagram of $\overline{\mathbf{H}}^{red}$ \cite{T2}. We then use the orders of Section \ref{section:orders}, \cite{Ono}, and Proposition \ref{unip} to compute the size of each possible quotient. For each case below, we give the local diagram, where we have distinguished the nodes associated with similar reductions. We follow the diagram with a table listing the Killing--Cartan type and order of the reduction group associated with each class of node.

Case 1 - $\mathbf{H}$ is residually split of type $B_{n-1}$.

\begin{center}
  \begin{tikzpicture}[scale=.7]
    \draw (-3,0) node[anchor=east]  {$B_{n-1}$};
    \foreach \x in {1,...,5}
    \draw[xshift=\x cm,thick] (\x cm,-0.5) -- (\x, 0.5 cm);
    \foreach \x in {1,3}
    \draw[xshift=\x cm,thick] (\x cm,0) -- +(2 cm, 0cm);
   \draw[dotted,thick] (3 cm,0) -- +(3 cm, 0cm);
    \draw[thick] (0 cm, 0.5 cm) -- +(0 cm,1.0cm);
    \draw[thick] (0 cm, -0.5 cm) -- +(0 cm,-1.0cm);
    \draw[thick] (0 cm, -1 cm) -- (2 cm,0cm);
    \draw[thick] (0 cm, 1 cm) -- (2 cm,0cm);
    \draw[thick] (8 cm, .1 cm) -- +(2 cm,0);
    \draw[thick] (8 cm, -.1 cm) -- +(2 cm,0);
    \draw (9.4,0) node[anchor=east]  {$\Big>$};
    
    \draw[dotted, thin] (-1 cm, 2 cm) -- (-1 cm,-3cm);
    \draw[dotted, thin] (1 cm, 2 cm) -- (1 cm,-3cm);
    \draw[dotted, thin] (3 cm, 2 cm) -- (3 cm,-3cm);
    \draw[dotted, thin] (7 cm, 2 cm) -- (7 cm,-3cm);
    \draw[dotted, thin] (9 cm, 2 cm) -- (9 cm,-3cm);
    \draw[dotted, thin] (11 cm, 2 cm) -- (11 cm,-3cm);
    \draw (0.5cm,-2.5cm) node[anchor=east]  {$\mathcal{T}_1$};
    \draw (2.5cm,-2.5cm) node[anchor=east]  {$\mathcal{T}_2$};
    \draw (5.5cm,-2.5cm) node[anchor=east]  {$\mathcal{T}_3$};
    \draw (8.5cm,-2.5cm) node[anchor=east]  {$\mathcal{T}_4$};
    \draw (10.5cm,-2.5cm) node[anchor=east]  {$\mathcal{T}_5$};
    
    \draw (0.5cm,1.75cm) node[anchor=east]  {$v_0$};
    \draw (0.5cm,-0.25cm) node[anchor=east]  {$v_1$};
    \draw (2.5cm,1cm) node[anchor=east]  {$v_2$};
    \draw (4.5cm,1cm) node[anchor=east]  {$v_3$};
    \draw (7cm,1cm) node[anchor=east]  {$v_{n-3}$};
    \draw (9cm,1cm) node[anchor=east]  {$v_{n-2}$};
    \draw (11cm,1cm) node[anchor=east]  {$v_{n-1}$};
  \end{tikzpicture}
\end{center}

 \begin{adjustwidth}{-0.5cm}{}
\begin{center}
    \begin{tabular}{ | c | l | l |}
    \hline
    & Type of $\overline{\mathbf{H}}^{red}$ & Order of $\overline{\mathbf{H}}^{red}$ \\ \hline
    $\mathcal{T}_1$ & $B_{n-1}$ & $p^{r(n-1)^2}\prod_{j=1}^{n-1}(p^{2rj}-1)$ \\ \hline
    $\mathcal{T}_2$ & $A_1\times A_1 \times B_{n-3}$ 
    & $(p^r(p^{2r}-1))^2\ \bigg(p^{r(n-3)^2}\prod_{j=1}^{n-3}(p^{2rj}-1)\bigg)$ \\ \hline
    $\mathcal{T}_3$ & $D_{k}\times B_{n-k-1}$\ ($3 \le k\le n-3$)
    & $\bigg(p^{rk(k-1)}(p^{rk}\pm 1)\prod_{j=1}^{k-1}(p^{2rj}-1)\bigg) \ \bigg(p^{r(n-k-1)^2}\prod_{j=1}^{n-k-1}(p^{2rj}-1)\bigg)$ \\ \hline
    $\mathcal{T}_4$ & $D_{n-2}\times A_1$& $ \bigg(p^{r(n-2)(n-3)}(p^{r(n-2)}\pm 1)\prod_{j=1}^{n-3}(p^{2rj}-1)\bigg) (p^r(p^{2r}-1))$ \\ \hline
    $\mathcal{T}_5$ & $D_{n-1}$ & $p^{r(n-1)(n-2)}(p^{r(n-1)}\pm 1)\prod_{j=1}^{n-2}(p^{2rj}-1)$ \\
    \hline
    \end{tabular}
\end{center}
\end{adjustwidth}

Case 2 - $\mathbf{H}$ is not residually split of type $B_{n-1}$.

\begin{center}
  \begin{tikzpicture}[scale=.7]
    \draw (-3,0) node[anchor=east]  {${}^{(2)}B_{n-1}$};
    \foreach \x in {0,1,...,5}
    \draw[xshift=\x cm,thick] (\x cm,-0.5) -- (\x, 0.5 cm);
    \foreach \x in {1,3}
    \draw[xshift=\x cm,thick] (\x cm,0) -- +(2 cm, 0cm);
   \draw[dotted,thick] (3 cm,0) -- +(3 cm, 0cm);
    \draw[thick] (8 cm, .1 cm) -- +(2 cm,0);
    \draw[thick] (8 cm, -.1 cm) -- +(2 cm,0);
    \draw[thick] (0 cm, .1 cm) -- +(2 cm,0);
    \draw[thick] (0 cm, -.1 cm) -- +(2 cm,0);
    \draw (9.4,0) node[anchor=east]  {$\Big>$};
    \draw (1.35,0) node[anchor=east]  {$\Big<$};
    
    \draw[dotted, thin] (-1 cm, 2 cm) -- (-1 cm,-3cm);
    \draw[dotted, thin] (1 cm, 2 cm) -- (1 cm,-3cm);
    \draw[dotted, thin] (3 cm, 2 cm) -- (3 cm,-3cm);
    \draw[dotted, thin] (7 cm, 2 cm) -- (7 cm,-3cm);
    \draw[dotted, thin] (9 cm, 2 cm) -- (9 cm,-3cm);
    \draw[dotted, thin] (11 cm, 2 cm) -- (11 cm,-3cm);
    \draw (0.5cm,-2.5cm) node[anchor=east]  {$\mathcal{T}_6$};
    \draw (2.5cm,-2.5cm) node[anchor=east]  {$\mathcal{T}_7$};
    \draw (5.5cm,-2.5cm) node[anchor=east]  {$\mathcal{T}_8$};
    \draw (8.5cm,-2.5cm) node[anchor=east]  {$\mathcal{T}_7$};
    \draw (10.5cm,-2.5cm) node[anchor=east]  {$\mathcal{T}_6$};
    
    \draw (0.5cm,1cm) node[anchor=east]  {$v_1$};
    \draw (2.5cm,1cm) node[anchor=east]  {$v_2$};
    \draw (4.5cm,1cm) node[anchor=east]  {$v_3$};
    \draw (7cm,1cm) node[anchor=east]  {$v_{n-3}$};
    \draw (9cm,1cm) node[anchor=east]  {$v_{n-2}$};
    \draw (11cm,1cm) node[anchor=east]  {$v_{n-1}$};
  \end{tikzpicture}
\end{center}

\begin{center}
    \begin{tabular}{ | c | l | l |}
    \hline
    & Type of $\overline{\mathbf{H}}^{red}$ & Order of $\overline{\mathbf{H}}^{red}$ \\ \hline
    $\mathcal{T}_6$ & $B_{n-2}$ & $p^{r(n-2)^2}\prod_{j=1}^{n-2}(p^{2rj}-1)$ \\ \hline
    $\mathcal{T}_7$ & $A_1 \times B_{n-3}$ 
    & $(p^r(p^{2r}-1))\ \bigg(p^{r(n-3)^2}\prod_{j=1}^{n-3}(p^{2rj}-1)\bigg)$ \\ \hline
    $\mathcal{T}_8$ & $B_{k-1}\times B_{n-k-1}$\ ($3\le k\le n-3$)
    & $\bigg(p^{r(k-1)^2}\prod_{j=1}^{k-1}(p^{2rj}-1)\bigg) \ \bigg(p^{r(n-k-1)^2}\prod_{j=1}^{n-k-1}(p^{2rj}-1)\bigg)$ \\ \hline
     \end{tabular}
\end{center}
\leftskip0pc
\end{proof}

\begin{prop}\label{comp2}
Continuing the notation of the earlier sections, suppose $\mathbf{H}_0=\mathbf{SO}(q_0)$ for some quadratic subform $q_0\subset q$ of even dimension $2n\ge 4$ and let $\Fr{p}\subset S_1$. 
Then $\left|\overline{\mathbf{H}}_0^{red}(\mathbb{F}_{p^r})\right|$ divides $p^XY$ where $X\in \Z_{\ge 0}$ and $Y$ is one of the following:
\begin{enumerate}[\qquad (S1)]
\item $(p^{rn}\pm 1)\prod_{j=1}^{n-1}(p^{2rj}-1)$,
\item $(p^{2r}-1)^2\ (p^{r(n-2)}\pm 1)\prod_{j=1}^{n-3}(p^{2rj}-1)$,
\item $(p^{rk}\pm 1)(p^{r(n-k)}\pm 1) \Big(\prod_{j=1}^{k-1}(p^{2rj}-1)\Big) \ \Big(\prod_{j=1}^{n-k-1}(p^{2rj}-1)\Big)$ for $3 \le k\le n-3$,
\item $\prod_{j=1}^{n-1}(p^{2rj}-1)$,
\item $(p^{2r}-1)\ \prod_{j=1}^{n-2}(p^{2rj}-1)$,
\item $\Big(\prod_{j=1}^{k-1}(p^{2rj}-1)\Big)\ \Big(\prod_{j=1}^{n-k}(p^{2rj}-1)\Big)$ for $3 \le k\le n-2$,
\end{enumerate}
or any of (T1) through (T8) listed in the previous proposition.
\end{prop}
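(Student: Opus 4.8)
The plan is to mirror the proof of Proposition~\ref{comp1} exactly, but now running through the classification of local indices of even-dimensional orthogonal $k_{\Fr{p}}$--groups rather than odd-dimensional ones. Since $\mathbf{H}_0 = \mathbf{SO}(q_0)$ with $\dim q_0 = 2n$, the group $\mathbf{H}_0$ is of type $D_n$ (when $n \ge 3$; the small cases $n = 2$, where $D_2 = A_1 \times A_1$, and the exceptional isogeny $D_3 = A_3$, should be checked separately, which is why the statement also allows the outcomes (T1)--(T8) coming from lower-rank subforms). As in the previous proposition, the starting point is that every parahoric lies in a maximal one, so it suffices to enumerate the reductions of maximal parahorics; these are obtained by deleting a single vertex from the local Dynkin diagram of $\mathbf{H}_0$ over $k_{\Fr{p}}$, and the resulting semisimple $\mathbb{F}_{p^r}$--group $\overline{\mathbf{H}}_0^{red}$ has order computable from the tables in Section~\ref{section:orders}, \cite{Ono}, and Proposition~\ref{unip} for the unipotent contribution that feeds the $p^X$ factor.

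The key steps, in order: first, list the possible local indices of an even-dimensional orthogonal group over $k_{\Fr{p}}$. Using \cite{T2}, the relevant residual types for $D_n$ are the split form $D_{n,n}$ (local diagram of type $D_n$ with the usual fork at each end), the quasi-split non-split form $^{(2)}D_{n,n-1}$ (local diagram $^2D_n$ with one fork collapsed to a multiple bond), and — because $\mathbf{H}_0$ can be isotropic of lower $k_{\Fr{p}}$--rank than its $k_{\Fr{p}}^{un}$--rank — the non-residually-split diagrams, whose affine diagram is of the form $C\!-\!BC\!-\!C$ type with $B_{k}$ or $C_k$ ``tails'' and a central $B$-segment, analogous to Case~2 of Proposition~\ref{comp1}. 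Second, for each such affine diagram, delete each orbit of vertices under the relevant symmetry and read off the Killing--Cartan type of $\overline{\mathbf{H}}_0^{red}$: deleting an interior vertex of a split $D_n$ diagram gives $D_k \times D_{n-k}$ (or $A_1 \times A_1 \times D_{n-2}$ near the fork), giving the factors $(p^{rk}\pm 1)(p^{r(n-k)}\pm1)\prod(\cdots)\prod(\cdots)$ of (S2)--(S3) and the end-vertex deletions giving $D_{n-1}$-type contributions, i.e.\ (S1); the non-residually-split diagrams produce products of two $B$-type groups, which is exactly where (S4)--(S6) and the reappearance of (T1)--(T8) come from. Third, assemble the order formulas: by (\ref{E:Bn})--(\ref{E:DnNotSquare}) each factor $|SO(2n+1;p^r)|$, $|SO^\pm(2n;p^r)|$ contributes a $p$-power times a product of the form $\prod(p^{2rj}-1)$ or $(p^{rk}\pm1)\prod(p^{2rj}-1)$, and multiplying the (at most two) simple factors of $\overline{\mathbf{H}}_0^{red}$ together and absorbing all $p$-powers into $X$ yields precisely one of (S1)--(S6). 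Finally, since the actual $\overline{\mathbf{H}}_0^{red}$ is a subgroup of (or has order dividing that of) the quasi-split group attached to its diagram, ``$|\overline{\mathbf{H}}_0^{red}(\mathbb{F}_{p^r})|$ divides $p^X Y$'' follows.

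I expect the main obstacle to be the careful bookkeeping of the non-residually-split local indices for type $D_n$: unlike the $B_n$ case, the quasi-split outer form $^2D_n$ introduces a genuinely different end of the diagram, and one must be sure to capture every maximal parahoric, including those whose reductions are products $B_{k-1} \times B_{n-k-1}$ or $B_{k-1} \times D_{n-k}$, and to verify that these do not produce order formulas outside the advertised list — this is precisely why the statement is allowed to fall back on (T1)--(T8). A secondary (but purely organizational) point is handling the degenerate small-rank coincidences $D_2 \cong A_1 \times A_1$ and $D_3 \cong A_3$ so that the hypothesis $2n \ge 4$ is actually covered; these contribute factors of the form $(p^{2r}-1)$ and $(p^{2r}-1)(p^{3r}\mp 1)$ respectively, which are subsumed in (S4)--(S5) (or (T1)). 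Beyond that, the computation is routine multiplication of the orders from Section~\ref{section:orders} once the diagrams are in hand, exactly as in the proof of Proposition~\ref{comp1}.
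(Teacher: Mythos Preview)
Your approach is the paper's: enumerate the local indices for type $D_n$ over $k_{\Fr{p}}$, delete single vertices of the affine diagram to obtain the reductive quotients of maximal parahorics, and read off orders from \eqref{E:Bn}--\eqref{E:DnNotSquare}. Two corrections to your case-bookkeeping, though, which matter for getting the labels right. First, (S4)--(S6) do \emph{not} come from a non-residually-split case: they arise from the \emph{residually split} non-split quasisplit form ${}^2D^{(1)}_{n,n-1}$ (both ends of the affine diagram carry a double bond), whose vertex deletions give $B_{n-1}$, $A_1\times B_{n-2}$, and $B_{k-1}\times B_{n-k}$. Second, the reason (T1)--(T8) reappear has nothing to do with low-rank coincidences $D_2\cong A_1\times A_1$ or $D_3\cong A_3$; rather, the two \emph{non}-residually-split local diagrams for $D_n$ (one with a fork at one end and a double bond at the other, one with double bonds at both ends) are literally the same affine Coxeter diagrams as the two $B_{n-1}$ diagrams already analyzed in Proposition~\ref{comp1}, so those tables carry over verbatim.
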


\begin{proof}${}$
The idea and presentation of this proof is the same as for Proposition \ref{comp1}.
		
Case 1 - $\mathbf{H}$ is residually split of type $D_{n}$ and in fact $\mathbf{H}$ splits over $k_{\Fr{p}}$.

\begin{center}
  \begin{tikzpicture}[scale=.7]
    \draw (-3,0) node[anchor=east]  {${}^1D^{(1)}_{n,n}$};
    \foreach \x in {1,...,4}
    \draw[xshift=\x cm,thick] (\x cm,-0.5) -- (\x, 0.5 cm);
    \foreach \x in {1,3}
    \draw[xshift=\x cm,thick] (\x cm,0) -- +(2 cm, 0cm);
   \draw[dotted,thick] (3 cm,0) -- +(3 cm, 0cm);
    \draw[thick] (0 cm, 0.5 cm) -- +(0 cm,1.0cm);
    \draw[thick] (0 cm, -0.5 cm) -- +(0 cm,-1.0cm);
    \draw[thick] (0 cm, -1 cm) -- (2 cm,0cm);
    \draw[thick] (0 cm, 1 cm) -- (2 cm,0cm);
    \draw[thick] (8 cm, .0 cm) -- +(2 cm,1cm);
    \draw[thick] (8 cm, 0 cm) -- +(2 cm,-1cm);
    \draw[thick] (10 cm, 0.5 cm) -- +(0 cm,1.0cm);
    \draw[thick] (10 cm, -0.5 cm) -- +(0 cm,-1.0cm);
    
    \draw[dotted, thin] (-1 cm, 2 cm) -- (-1 cm,-3cm);
    \draw[dotted, thin] (1 cm, 2 cm) -- (1 cm,-3cm);
    \draw[dotted, thin] (3 cm, 2 cm) -- (3 cm,-3cm);
    \draw[dotted, thin] (7 cm, 2 cm) -- (7 cm,-3cm);
    \draw[dotted, thin] (9 cm, 2 cm) -- (9 cm,-3cm);
    \draw[dotted, thin] (11 cm, 2 cm) -- (11 cm,-3cm);
    \draw (0.5cm,-2.5cm) node[anchor=east]  {$\mathcal{S}_1$};
    \draw (2.5cm,-2.5cm) node[anchor=east]  {$\mathcal{S}_2$};
    \draw (5.5cm,-2.5cm) node[anchor=east]  {$\mathcal{S}_3$};
    \draw (8.5cm,-2.5cm) node[anchor=east]  {$\mathcal{S}_2$};
    \draw (10.5cm,-2.5cm) node[anchor=east]  {$\mathcal{S}_1$};
    
    \draw (0.5cm,1.75cm) node[anchor=east]  {$v_0$};
    \draw (0.5cm,-0.25cm) node[anchor=east]  {$v_1$};
    \draw (2.5cm,1cm) node[anchor=east]  {$v_2$};
    \draw (4.5cm,1cm) node[anchor=east]  {$v_3$};
    \draw (7cm,1cm) node[anchor=east]  {$v_{n-3}$};
    \draw (9cm,1cm) node[anchor=east]  {$v_{n-2}$};
    \draw (11cm,-0.25cm) node[anchor=east]  {$v_{n-1}$};
    \draw (10.5cm,1.75cm) node[anchor=east]  {$v_{n}$};
  \end{tikzpicture}
  \end{center}

 \begin{adjustwidth}{-1.0cm}{}
\begin{center}
    \begin{tabular}{ | c | l | l |}
    \hline
    & Type of $\overline{\mathbf{H}}^{red}$ & Order of $\overline{\mathbf{H}}^{red}$ \\ \hline
    $\mathcal{S}_1$ & $D_{n}$ & $p^{rn(n-1)}(p^{rn}\pm 1)\prod_{j=1}^{n-1}(p^{2rj}-1)$ \\ \hline
    $\mathcal{S}_2$ & $A_1\times A_1 \times D_{n-2}$ 
    & $(p^r(p^{2r}-1))^2\ \bigg(p^{r(n-2)(n-3)}(p^{r(n-2)}\pm 1)\prod_{j=1}^{n-3}(p^{2rj}-1)\bigg)$ \\ \hline
    $\mathcal{S}_3$ & $D_{k}\times D_{n-k}$\ ($3 \le k\le n-3$)
    & $\bigg(p^{rk(k-1)}(p^{rk}\pm 1)\prod_{j=1}^{k-1}(p^{2rj}-1)\bigg) \ \bigg(p^{r(n-k)(n-k-1)}(p^{r(n-k)}\pm 1)\prod_{j=1}^{n-k-1}(p^{2rj}-1))\bigg)$ \\
    \hline
    \end{tabular}
\end{center}
\end{adjustwidth}

Case 2 - $\mathbf{H}$ is residually split of type $D_{n}$ where $\mathbf{H}$ is nonsplit quasisplit over both $k_{\Fr{p}}$ and $k_{\Fr{p}}^{un}$.

  \begin{center}
  \begin{tikzpicture}[scale=.7]
    \draw (-3,0) node[anchor=east]  {${}^{2}D^{(1)}_{n,n-1}$};
    \foreach \x in {0,1,...,5}
    \draw[xshift=\x cm,thick] (\x cm,-0.5) -- (\x, 0.5 cm);
    \foreach \x in {1,3}
    \draw[xshift=\x cm,thick] (\x cm,0) -- +(2 cm, 0cm);
   \draw[dotted,thick] (3 cm,0) -- +(3 cm, 0cm);
    \draw[thick] (8 cm, .1 cm) -- +(2 cm,0);
    \draw[thick] (8 cm, -.1 cm) -- +(2 cm,0);
    \draw[thick] (0 cm, .1 cm) -- +(2 cm,0);
    \draw[thick] (0 cm, -.1 cm) -- +(2 cm,0);
    \draw (9.4,0) node[anchor=east]  {$\Big>$};
    \draw (1.35,0) node[anchor=east]  {$\Big<$};
    
    \draw[dotted, thin] (-1 cm, 2 cm) -- (-1 cm,-3cm);
    \draw[dotted, thin] (1 cm, 2 cm) -- (1 cm,-3cm);
    \draw[dotted, thin] (3 cm, 2 cm) -- (3 cm,-3cm);
    \draw[dotted, thin] (7 cm, 2 cm) -- (7 cm,-3cm);
    \draw[dotted, thin] (9 cm, 2 cm) -- (9 cm,-3cm);
    \draw[dotted, thin] (11 cm, 2 cm) -- (11 cm,-3cm);
    \draw (0.5cm,-2.5cm) node[anchor=east]  {$\mathcal{S}_4$};
    \draw (2.5cm,-2.5cm) node[anchor=east]  {$\mathcal{S}_5$};
    \draw (5.5cm,-2.5cm) node[anchor=east]  {$\mathcal{S}_6$};
    \draw (8.5cm,-2.5cm) node[anchor=east]  {$\mathcal{S}_5$};
    \draw (10.5cm,-2.5cm) node[anchor=east]  {$\mathcal{S}_4$};
    
    \draw (0.5cm,1cm) node[anchor=east]  {$v_1$};
    \draw (2.5cm,1cm) node[anchor=east]  {$v_2$};
    \draw (4.5cm,1cm) node[anchor=east]  {$v_3$};
    \draw (7cm,1cm) node[anchor=east]  {$v_{n-2}$};
    \draw (9cm,1cm) node[anchor=east]  {$v_{n-1}$};
    \draw (10.5cm,1cm) node[anchor=east]  {$v_{n}$};
  \end{tikzpicture}
\end{center}

\begin{center}
    \begin{tabular}{ | c | l | l |}
    \hline
    & Type of $\overline{\mathbf{H}}^{red}$ & Order of $\overline{\mathbf{H}}^{red}$ \\ \hline
    $\mathcal{S}_4$ & $B_{n-1}$ & $p^{r(n-1)^2}\prod_{j=1}^{n-1}(p^{2rj}-1)$ \\ \hline
    $\mathcal{S}_5$ & $A_1\times B_{n-2}$ 
    & $(p^r(p^{2r}-1))\ \bigg(p^{r(n-2)^2}\prod_{j=1}^{n-2}(p^{2rj}-1)\bigg)$ \\ \hline
    $\mathcal{S}_6$ & $B_{k-1}\times B_{n-k}$\ ($3 \le k\le n-2$)
    & $\bigg(p^{r(k-1)^2}\prod_{j=1}^{k-1}(p^{2rj}-1)\bigg) \ \bigg(p^{r(n-k)^2}\prod_{j=1}^{n-k}(p^{2rj}-1)\bigg)$ \\
    \hline
    \end{tabular}
\end{center}

Case 3 - $\mathbf{H}$ is not residually split of type $D_n$ where $\mathbf{H}$ is nonsplit quasi-split over $k_{\Fr{p}}$ but splits over $k_{\Fr{p}}^{un}$.

\begin{center}
  \begin{tikzpicture}[scale=.7]
    \draw (-3,0) node[anchor=east]  {${}^2D^{(1)}_{n,n-1}$};
    \foreach \x in {1,...,5}
    \draw[xshift=\x cm,thick] (\x cm,-0.5) -- (\x, 0.5 cm);
    \foreach \x in {1,3}
    \draw[xshift=\x cm,thick] (\x cm,0) -- +(2 cm, 0cm);
   \draw[dotted,thick] (3 cm,0) -- +(3 cm, 0cm);
    \draw[thick] (0 cm, 0.5 cm) -- +(0 cm,1.0cm);
    \draw[thick] (0 cm, -0.5 cm) -- +(0 cm,-1.0cm);
    \draw[thick] (0 cm, -1 cm) -- (2 cm,0cm);
    \draw[thick] (0 cm, 1 cm) -- (2 cm,0cm);
    \draw[thick] (8 cm, .1 cm) -- +(2 cm,0);
    \draw[thick] (8 cm, -.1 cm) -- +(2 cm,0);
    \draw (9.4,0) node[anchor=east]  {$\Big>$};
    
    \draw[dotted, thin] (-1 cm, 2 cm) -- (-1 cm,-3cm);
    \draw[dotted, thin] (1 cm, 2 cm) -- (1 cm,-3cm);
    \draw[dotted, thin] (3 cm, 2 cm) -- (3 cm,-3cm);
    \draw[dotted, thin] (7 cm, 2 cm) -- (7 cm,-3cm);
    \draw[dotted, thin] (9 cm, 2 cm) -- (9 cm,-3cm);
    \draw[dotted, thin] (11 cm, 2 cm) -- (11 cm,-3cm);
    \draw (0.5cm,-2.5cm) node[anchor=east]  {$\mathcal{T}_{1}$};
    \draw (2.5cm,-2.5cm) node[anchor=east]  {$\mathcal{T}_{2}$};
    \draw (5.5cm,-2.5cm) node[anchor=east]  {$\mathcal{T}_{3}$};
    \draw (8.5cm,-2.5cm) node[anchor=east]  {$\mathcal{T}_{4}$};
    \draw (10.5cm,-2.5cm) node[anchor=east]  {$\mathcal{T}_{5}$};
    
    \draw (0.5cm,1.75cm) node[anchor=east]  {$v_0$};
    \draw (0.5cm,-0.25cm) node[anchor=east]  {$v_1$};
    \draw (2.5cm,1cm) node[anchor=east]  {$v_2$};
    \draw (4.5cm,1cm) node[anchor=east]  {$v_3$};
    \draw (7cm,1cm) node[anchor=east]  {$v_{n-3}$};
    \draw (9cm,1cm) node[anchor=east]  {$v_{n-2}$};
    \draw (11cm,1cm) node[anchor=east]  {$v_{n-1}$};
  \end{tikzpicture}
\end{center}

Case 4 - $\mathbf{H}$ is not residually split of type $D_n$ where $\mathbf{H}$ is not quasi-split over $k_{\Fr{p}}$, but splits over $k_{\Fr{p}}^{un}$.

\begin{center}
  \begin{tikzpicture}[scale=.7]
    \draw (-3,0) node[anchor=east]  {${}^{1}D^{(1)}_{n,n-2}$};
    \foreach \x in {0,1,...,5}
    \draw[xshift=\x cm,thick] (\x cm,-0.5) -- (\x, 0.5 cm);
    \foreach \x in {1,3}
    \draw[xshift=\x cm,thick] (\x cm,0) -- +(2 cm, 0cm);
   \draw[dotted,thick] (3 cm,0) -- +(3 cm, 0cm);
    \draw[thick] (8 cm, .1 cm) -- +(2 cm,0);
    \draw[thick] (8 cm, -.1 cm) -- +(2 cm,0);
    \draw[thick] (0 cm, .1 cm) -- +(2 cm,0);
    \draw[thick] (0 cm, -.1 cm) -- +(2 cm,0);
    \draw (9.4,0) node[anchor=east]  {$\Big>$};
    \draw (1.35,0) node[anchor=east]  {$\Big<$};
    
    \draw[dotted, thin] (-1 cm, 2 cm) -- (-1 cm,-3cm);
    \draw[dotted, thin] (1 cm, 2 cm) -- (1 cm,-3cm);
    \draw[dotted, thin] (3 cm, 2 cm) -- (3 cm,-3cm);
    \draw[dotted, thin] (7 cm, 2 cm) -- (7 cm,-3cm);
    \draw[dotted, thin] (9 cm, 2 cm) -- (9 cm,-3cm);
    \draw[dotted, thin] (11 cm, 2 cm) -- (11 cm,-3cm);
    \draw (0.5cm,-2.5cm) node[anchor=east]  {$\mathcal{T}_6$};
    \draw (2.5cm,-2.5cm) node[anchor=east]  {$\mathcal{T}_{7}$};
    \draw (5.5cm,-2.5cm) node[anchor=east]  {$\mathcal{T}_{8}$};
    \draw (8.5cm,-2.5cm) node[anchor=east]  {$\mathcal{T}_{7}$};
    \draw (10.5cm,-2.5cm) node[anchor=east]  {$\mathcal{T}_6$};
    
    \draw (0.5cm,1cm) node[anchor=east]  {$v_1$};
    \draw (2.5cm,1cm) node[anchor=east]  {$v_2$};
    \draw (4.5cm,1cm) node[anchor=east]  {$v_3$};
    \draw (7cm,1cm) node[anchor=east]  {$v_{n-3}$};
    \draw (9cm,1cm) node[anchor=east]  {$v_{n-2}$};
    \draw (11cm,1cm) node[anchor=east]  {$v_{n-1}$};
  \end{tikzpicture}
\end{center}

Observe that these last two diagrams are precisely the the same as the diagrams analyzed in the previous proof, and hence the corresponding Killing--Cartan types and orders are the same.
\end{proof}

\section{Proof of Theorem \ref{thmA}}

Recall that $G_\mathfrak{p}=\mathbf{G}(\mathcal{O}_k/\mathfrak{p})=SO(m+1,p^r)$, and in the previous two sections, we analyzed the orders of its subgroups
 $\rho_{\Fr{p}}(\mathbf{G}(\Cal{O}_k))$ and $H_\mathfrak{p}$.
 To now prove Theorem \ref{thmA}, we need the following result of Zsigmondy \cite{Zsig}.

\begin{thm}[Zsigmondy]\label{ZsigCor}
Let $p$ be an odd prime and $d$ be an integer greater than one. There exists a prime divisor of $p^d+1$ that does not divide $p^j+1$ for all $0 < j < d$ and does not divide $p^j-1$ for all $0 < j < 2d$.
\end{thm}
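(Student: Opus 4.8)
The plan is to derive the statement from the classical theorem of Zsigmondy on primitive prime divisors of $a^n - 1$, applied with base $a = p^2$. Recall that Zsigmondy's theorem asserts that for integers $a > 1$ and $n > 1$, the number $a^n - 1$ has a prime divisor that divides no $a^m - 1$ with $0 < m < n$, with the two well-known exceptions $n = 2$ with $a + 1$ a power of $2$, and $(a,n) = (2,6)$. Since we are allowed to assume $p$ is an odd prime and $d > 1$, I would first observe that $p^d + 1$ divides $p^{2d} - 1$ but is coprime to $p^d - 1$ (their gcd divides $2$, and after accounting for the factor $2$ one checks the odd part of $p^d+1$ is the relevant piece), so a primitive prime divisor $\ell$ of $p^{2d} - 1$ must divide $p^d + 1$ rather than $p^d - 1$. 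This $\ell$ is the candidate prime in the statement.

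The key steps, in order, are as follows. First, apply Zsigmondy to $a = p$ and exponent $2d$: since $2d \geq 4 > 2$ and $(p, 2d) \neq (2,6)$ because $p$ is odd, there is no exception, and we obtain a prime $\ell$ dividing $p^{2d} - 1$ but not $p^j - 1$ for any $0 < j < 2d$. This immediately gives the second required property, that $\ell \nmid p^j - 1$ for $0 < j < 2d$. Second, I would show $\ell \mid p^d + 1$: from $p^{2d} - 1 = (p^d - 1)(p^d + 1)$ and $\ell \nmid p^d - 1$ (as $0 < d < 2d$) we get $\ell \mid p^d + 1$. Third, for the remaining property that $\ell \nmid p^j + 1$ for $0 < j < d$, suppose for contradiction $\ell \mid p^j + 1$ with $j < d$; then $\ell \mid p^{2j} - 1$ with $0 < 2j < 2d$, contradicting primitivity of $\ell$. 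This closes the argument.

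I do not expect a serious obstacle here: the only subtlety is verifying that the Zsigmondy exceptions do not occur, which is handled by the hypotheses ($p$ odd rules out $(2,6)$, and $2d > 2$ rules out the $n=2$ exceptional case entirely). The main "content" is simply the correct bookkeeping of which factor of $p^{2d}-1$ the primitive divisor lands in, and translating "primitive divisor of $p^{2d}-1$" into the three bulleted conclusions about $p^d \pm 1$ and $p^j \pm 1$; each of these is a one-line divisibility deduction of the form used above. If one prefers to avoid invoking the general Zsigmondy theorem as a black box, an alternative is to cite the cited reference \cite{Zsig} directly for the statement about $p^{2d}-1$ and perform only the elementary repackaging, which is the route I would actually take in the write-up.
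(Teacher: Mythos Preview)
Your derivation is correct, and in fact the paper does not give a proof of this statement at all: it simply attributes the result to Zsigmondy via the citation \cite{Zsig} and moves on. So your write-up supplies strictly more than the paper does, namely the elementary reduction of this $p^d+1$ formulation to the standard primitive-prime-divisor statement for $p^{2d}-1$.

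One small expository point: in your opening paragraph you say you will apply Zsigmondy with base $a = p^2$, but in the actual argument you (correctly) apply it with $a = p$ and exponent $2d$. The latter is what you want, since applying it with $a = p^2$ and exponent $d$ only rules out divisors of $p^{2m}-1$ for $m<d$, not of $p^j-1$ for all $j<2d$. Just align the plan with the execution when you write it up.
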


\begin{lemma}\label{L:PrimeCor}
Let $(k,q)$ be an admissible hyperbolic pair and $S_1$ the set of nondyadic primes in $\mathcal{O}_k$ where $q$ has good reduction. Then for each $\mathfrak{p} \in S_1$, there exists a subgroup $C_\mathfrak{p} < G_\mathfrak{p}$ such that $C_\mathfrak{p} \cap H_\mathfrak{p} = \set{1}$ for any $H_\mathfrak{p}$.
\end{lemma}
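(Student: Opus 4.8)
The plan is to use the order computations from Sections~\ref{section:orders} and \ref{sec:Step2} together with Zsigmondy's theorem to produce a prime $\ell$ whose existence is incompatible with dividing $|H_\mathfrak{p}|$, and then take $C_\mathfrak{p}$ to be a cyclic subgroup of $G_\mathfrak{p}$ of order $\ell$. First I would apply Zsigmondy's theorem (Theorem~\ref{ZsigCor}) with $d=m+1$ (or a suitable related quantity, after passing from $p$ to $p^r$) to produce a prime $\ell$ dividing $p^{r(m+1)}+1$ that divides no $p^{rj}\pm 1$ for small $j$; the key point is to choose $d$ large enough that $\ell$ provably fails to divide every factor of the form $p^{2rj}-1$ or $p^{rj}\pm 1$ appearing in the orders (T1)--(T8) and (S1)--(S6) of Propositions~\ref{comp1} and \ref{comp2}, and also fails to divide $|G_\mathfrak{p}|$'s relevant factors only to the first power, so that a Sylow-type argument gives a cyclic subgroup of order exactly $\ell$. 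I would then observe that $\ell$ does divide $|G_\mathfrak{p}|$ (using the order formulas \eqref{E:Bn}, \eqref{E:DnSquare}, \eqref{E:DnNotSquare}, where the factor $p^{r(m+1)/2}\pm 1$ or $p^{r\cdot\text{(half dim)}}\pm 1$ appears), so a subgroup $C_\mathfrak{p}$ of order $\ell$ exists inside $G_\mathfrak{p}$; since $[G_\mathfrak{p}:\rho_\mathfrak{p}(\mathbf{G}(\mathcal{O}_k))]\le 2$ by Proposition~\ref{P:Bob} and $\ell$ is odd, in fact $\ell$ divides $|\rho_\mathfrak{p}(\mathbf{G}(\mathcal{O}_k))|$ and $C_\mathfrak{p}$ may be taken inside that subgroup.

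Next I would bound $|H_\mathfrak{p}|$. By the diagram in Section~\ref{sec:Step2}, $H_\mathfrak{p}$ embeds in $\overline{\mathbf{H}}(\mathbb{F}_{p^r})$, and by \eqref{E:Seq1} and Proposition~\ref{unip} we have $|\overline{\mathbf{H}}(\mathbb{F}_{p^r})| = p^s \cdot |\overline{\mathbf{H}}^{red}(\mathbb{F}_{p^r})|$. Here I need the reduction to the ``maximal-dimensional'' totally geodesic subgroups alluded to after the definition of residually split: any totally geodesic $\wt\Lambda$ corresponds to an orthogonal splitting $q \cong r \oplus t$ with $\dim r = n+1 \le m$ and $\dim t = m-n$, so $\mathbf{H} = \mathbf{H}_r \times \mathbf{H}_t$ with each factor an $\mathbf{SO}$ of a form of dimension at most $m$ (strictly less than $m+1$). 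Thus each of $\overline{\mathbf{H}}_r^{red}(\mathbb{F}_{p^r})$ and $\overline{\mathbf{H}}_t^{red}(\mathbb{F}_{p^r})$ has order dividing $p^X Y$ with $Y$ among the lists (T1)--(T8), (S1)--(S6) of Propositions~\ref{comp1} and \ref{comp2} (applied to the odd- or even-dimensional case as appropriate, with the relevant $n$ satisfying $2n\le m$ or $2n-1\le m$), and $|H_\mathfrak{p}|$ divides the product $|\overline{\mathbf{H}}_r^{red}(\mathbb{F}_{p^r})|\cdot|\overline{\mathbf{H}}_t^{red}(\mathbb{F}_{p^r})| \cdot p^{s}$. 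I would then check, factor by factor, that every cyclotomic-type factor $p^{2rj}-1$ or $p^{rj}\pm 1$ occurring in any such $Y$ has $j$ strictly smaller than the Zsigmondy bound, so $\ell \nmid Y$; combined with $\ell$ odd (hence $\ell \neq p$, so $\ell \nmid p^X$), this yields $\ell \nmid |H_\mathfrak{p}|$ for every totally geodesic $\wt\Lambda$. Consequently $C_\mathfrak{p} \cap H_\mathfrak{p}$ is a subgroup whose order divides $\gcd(\ell, |H_\mathfrak{p}|) = 1$, so $C_\mathfrak{p} \cap H_\mathfrak{p} = \{1\}$.

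The main obstacle will be the bookkeeping in the second paragraph: making sure the choice of $d$ in Zsigmondy's theorem is large enough to defeat \emph{all} the factors appearing across both propositions simultaneously — in particular the largest indices, which come from the $B_{n-1}$ or $D_n$ ``full'' reductions $\mathcal{T}_1$, $\mathcal{S}_1$ with $j$ running up to roughly $m/2$, and the mixed products $\mathcal{T}_3$, $\mathcal{S}_3$ where a factor $p^{r(k-1)}\pm 1$ or $p^{rk}\pm 1$ can appear — while still keeping $d \le m+1$ so that $\ell \mid |G_\mathfrak{p}|$. I expect the right choice is essentially $d = m+1$ when $m+1$ is even (so $p^{r(m+1)/2}+1$ or the full $p^{rn}\pm 1$ factor of $|G_\mathfrak{p}|$ is available) and a small modification when $m+1$ is odd, using that every proper totally geodesic factor has dimension at most $m-1$ and hence every factor in the $Y$'s involves exponents at most $m-1 < 2\cdot\frac{m+1}{2}$, which is exactly the range excluded by the ``does not divide $p^j-1$ for all $0<j<2d$'' clause. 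Once the arithmetic of exponents is pinned down, the conclusion $C_\mathfrak{p}\cap H_\mathfrak{p}=\{1\}$ is immediate; I would also remark that by Cebotarev (or directly) this works for a positive-density set of $\mathfrak{p}\in S_1$, which is what is needed for the geometrically isospectral case.
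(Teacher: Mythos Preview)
Your overall strategy---Zsigmondy plus the order bounds from Propositions~\ref{comp1} and \ref{comp2}---matches the paper's argument when $\dim q = 2n+1$ (type $B_n$) and when $\dim q = 2n$ with $\det q$ a nonsquare modulo $\mathfrak{p}$ (type $D_n$, nonsplit). In both of those cases $p^{rn}+1$ divides $|G_\mathfrak{p}|$, and Theorem~\ref{ZsigCor} applied with $d = rn$ (not $d = m+1$; your stated value is off) gives a prime $\ell$ avoiding every factor $p^{2rj}-1$ with $j \le n-1$ and $p^{rj}+1$ with $j \le n-1$ that can occur in $|H_\mathfrak{p}|$, exactly as you outline.

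The genuine gap is the remaining case: $\dim q = 2n$ with $\det q$ a square modulo $\mathfrak{p}$, so $G_\mathfrak{p} = SO^+(2n;p^r)$. Here $|G_\mathfrak{p}|$ contains the factor $p^{rn}-1$ rather than $p^{rn}+1$, and Theorem~\ref{ZsigCor} as stated (for $p^d+1$) gives you nothing: any $\ell$ coming from $p^d+1$ with $d$ small enough that $\ell \mid |G_\mathfrak{p}|$ will already divide one of the factors $p^{2rj}-1$, $j \le n-1$, which also appear in the $|H_\mathfrak{p}|$ for a codimension-one subform. The paper does \emph{not} patch this with a $p^d-1$ variant of Zsigmondy; instead it abandons the order-counting approach entirely and gives an eigenvalue argument. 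It takes $g$ to be a generator of a diagonal split torus $(SO^+(2;p^r))^n$, so that $g$ (and its nontrivial powers of odd prime order $\ell \mid p^r-1$) have no eigenvalue equal to $\pm 1$; then it uses that every totally geodesic subspace of a standard arithmetic orbifold sits inside a codimension-one totally geodesic subspace, so every $\gamma$ in a totally geodesic subgroup lies in some $\mathbf{PSO}_0(2n-2,1)$ and hence $\rho_\mathfrak{p}(\gamma)$ has $\pm 1$ as an eigenvalue. This forces $C_\mathfrak{p} \cap H_\mathfrak{p} = \{1\}$ without ever comparing group orders. Your proposal's ``small modification when $m+1$ is odd'' does not capture this, and the lemma requires handling \emph{every} $\mathfrak{p} \in S_1$, so you cannot simply avoid the split primes.
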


\begin{proof}
When $\dim(q) = 2n+1$, we know that $p^{nr}+1$ divides $\abs{G_\mathfrak{p}}$ for any prime $\mathfrak{p} \in S_1$ by \eqref{E:Bn}. For the groups $H_\Fr{p}$, we know that $\abs{H_\Fr{p}}$ divides $p^\alpha\prod_j (p^j-1) \prod_{j'} (p^{j'}+1)$, where $j \leq 2r(n-1)$ and $j' \leq r(n-1)$. Consequently, $p^{nr}+1$ is not a divisor of $\abs{H_\mathfrak{p}}$ for any totally geodesic subgroup. By Theorem \ref{ZsigCor}, there exists a prime divisor $\ell_\mathfrak{p}$ of $p^{nr}+1$ that does not divide $p^j+1$ for $0 < j < nr$ or $p^{2jr}-1$ for $0 < j < n$. It follows that $\ell_\mathfrak{p}$ divides $\abs{G_\Fr{p}}$ but not $\abs{H_\Fr{p}}$ for any totally geodesic subgroup. By Cauchy's theorem, there exists $g \in G_\mathfrak{p}$ of order $\ell_\mathfrak{p}$ and it follows that for $C_\mathfrak{p} = \innp{g}$ that $C_\mathfrak{p} \cap H_\mathfrak{p} = \set{1}$ for any totally geodesic subgroup. When $\dim(q) = 2n$ and $\mathfrak{p} \in S_1$, we must modify the argument above. If $\det(q)$ is not a square modulo $\mathfrak{p}$, then we can proceed as above since $p^{nr}+1$ divides $\abs{G_\mathfrak{p}}$. When $\det(q)$ is a square modulo $\mathfrak{p}$, we have $G_\mathfrak{p} = SO^+(2n;p^r)$. In this case, there exists $g \in SO^+(2n;p^r)$ such that $g$ has $n/2$ eigenvalues $\lambda_{p^r}$ and $n/2$ eigenvalues $\lambda_{p^r}^{-1}$ where $\lambda_{p^r} \in \BB{F}_{p^r}^\times$ is a generator for the group of units; we can take a generator for the diagonal subgroup of $(SO^+(2,p^r))^n$. Taking $\ell$ to be an odd prime divisor of $p^r-1$, which exists by Theorem \ref{ZsigCor}, and setting $a = (p^r-1)/\ell$, we assert that $C_\mathfrak{p} = \innp{g^a}$ is the desired subgroup. To see this, note that if $\gamma \in \B{PSO}_0(2n-2,1)$, then $\gamma$ has an eigenvalue of $\pm 1$ since $2n-2$ is even. As every totally geodesic $m'$--suborbifold with $m' \geq 2$ in a standard arithmetic orbifold is contained in a codimension one totally geodesic suborbifold (cf.\ \cite{Meyer}), it follows that $\rho_\mathfrak{p}(\gamma)$ has $\pm 1$ as an eigenvalue. As no non-trivial element of $C_\mathfrak{p}$ has this property, $C_\mathfrak{p} \cap H_\mathfrak{p} = \set{1}$. 
\end{proof}

\begin{proof}[Proof of Theorem \ref{thmA} for Standard Arithmetic Orbifolds]
As Theorem \ref{thmA} for $m=3$ was proven in \cite{McR}, we will assume $m\geq 4$ and so $\dim(q) \geq 5$.  We first prove (b). By definition, $\Gamma = \pi_1(M)$ is commensurable with $\mathbf{G}(\Cal{O}_k)$ associated with some admissible hyperbolic pair $(k,q)$.  Strong approximation implies that $\rho_\mathfrak{p}(\Gamma)=\rho_\mathfrak{p}(\mathbf{G}(\Cal{O}_k))$ for all but finitely many $\mathfrak{p}$, hence by Proposition \ref{P:Bob} there is an infinite subset $S_2$ of $S_1$ such that $[G_\mathfrak{p}:\rho_\mathfrak{p}(\Gamma)] = 1$ or $2$ for each $\mathfrak{p} \in S_2$.
By Lemma \ref{L:PrimeCor}, there exists a subgroup $C_\mathfrak{p} < G_\mathfrak{p}$ such that $C_\mathfrak{p} \cap H_\mathfrak{p} = \set{1}$. Since $C_\mathfrak{p}$ is cyclic and of odd prime order, it follows that $C_\mathfrak{p} < \rho_\mathfrak{p}(\Gamma)$. The subgroups $C_\mathfrak{p},\set{1}$ satisfy \eqref{SubgroupCondition} and so the covers $M_1,M_{C_\mathfrak{p}}$ corresponding to the finite index subgroups $\ker \rho_\mathfrak{p}$, $\rho_\mathfrak{p}^{-1}(C_\mathfrak{p})$ are geometrically equivalent.

To produce geometrically equivalent covers with unbounded volume ratio, for each odd prime $\ell$, we set $S_\ell$ to be the subset of primes $\mathfrak{p} \in S_2$ such that $C_\mathfrak{p}$ has order $\ell$. We first assume that $S_\ell$ is infinite for some $\ell$. In that case, for each $j \in \N$ and for any $\mathfrak{p}_1,\dots,\mathfrak{p}_j \in S_\ell$, the image of $\pi_1(M)$ under reduction modulo $\prod_i \mathfrak{p}_i$ has index $2^{s_j}$ in $\prod_i G_{\mathfrak{p}_i}$ for some $s_j \in \N$. By our choice of $\ell$, the subgroup $C_{j,\ell}=\prod_i C_{\mathfrak{p}_i} < \prod_i G_{\mathfrak{p}_i}$ has trivial intersection with the image of any totally geodesic subgroup, and visibly this property holds for any subgroup of $C_{j,\ell}$. Setting $M_j,N_j$ to be the finite covers of $M$ corresponding to the finite index subgroups $\rho_{\mathfrak{p}_1\dots\mathfrak{p}_j}^{-1}(1),\rho_{\mathfrak{p}_1\dots\mathfrak{p}_j}^{-1}(C_{j,\ell})$ of $\Gamma$, we obtain a pair of geometric equivalent finite covers of $M$ with volume ratio $\mathrm{Vol}(M_j)/\mathrm{Vol}(N_j) = \ell^j$.

We now assume that $\abs{S_\ell}$ is finite for all odd primes $\ell$. Since $S_2$ is infinite and each prime $\mathfrak{p} \in S_2$ is in $S_\ell$ for some odd prime $\ell$, there must be infinitely many odd primes $\ell$ with $S_\ell \ne \emptyset$. Fixing an infinite sequence $\set{\ell_j}$ of distinct odd primes with $S_{\ell_j} \ne \emptyset$, for any $j$ and any $\mathfrak{p}_j \in S_{\ell_j}$, we again have $[G_{\mathfrak{p}_j}:\rho_{\mathfrak{p}_j}(\Gamma)]=1$ or $2$. By our choice of $\mathfrak{p}_j$, we have a subgroup $C_{\mathfrak{p}_j} < G_{\mathfrak{p}_j}$ that intersects the image of every totally geodesic subgroup trivially. Setting the manifolds $M_j,N_j$ to be the finite covers of $M$ corresponding to the finite index subgroups $\rho_{\mathfrak{p}_j}^{-1}(1),\rho_{\mathfrak{p}_j}^{-1}(C_{\mathfrak{p}_j})$ of $\Gamma$, we obtain geometrically equivalent finite covers with volume ratio $\ell_j$.

We now prove (a). As $M$ is compact and $\dim(q) \geq 5$, we see that $k \ne \Q$ by Godement's Compactness Criterion (see \cite[Cor 5.3.2]{Witte}) and Meyer's Theorem (see \cite[Prop 6.4.1]{Witte}). Since $k \neq \Q$, by the Cebotarev Density Theorem there is a prime $p$ with two overlying primes $\mathfrak{p}_1,\mathfrak{p}_2 \in S_2$ such that $\mathcal{O}_k/\mathfrak{p}_1 \cong \mathcal{O}_k/\mathfrak{p}_2$. For a pair of such primes $\mathfrak{p}_1,\mathfrak{p}_2$ we have $G_{\mathfrak{p}_1} \cong G_{\mathfrak{p}_2}$, and can apply Lemma \ref{L:PrimeCor} to both. We obtain finite index subgroups $\rho_{\mathfrak{p}_1\mathfrak{p}_2}^{-1}(C_{\mathfrak{p}_1} \times \set{1}),\rho_{\mathfrak{p}_1\mathfrak{p}_2}^{-1}(\set{1} \times C_{\mathfrak{p}_2})$ of $\Gamma$. The associated finite covers $M_{\ell,1},M_{1,\ell}$ of $M$ have the same geometric spectra. To see that $\mathcal{TG}(M_{1,\ell}) = \mathcal{TG}(M_{\ell,1})$, we first note that the finite cover $M_{\ell,\ell}$ associated with the finite index subgroup $\rho_{\mathfrak{p}_1\mathfrak{p}_2}^{-1}(C_{\mathfrak{p}_1} \times C_{\mathfrak{p}_2})$ in $\pi_1(M)$ is geometrically equivalent to both $M_{\ell,1},M_{1,\ell}$ and so $\mathrm{TG}(M_{\ell,1}) = \mathrm{TG}(M_{1,\ell})$. To see that the multiplicities are equal simply note that both manifolds are cyclic covers of $M_{\ell,\ell}$ of degree $\ell$ and thus separately satisfy \eqref{SpectrumRelationship} with $M_{\ell,\ell}$. That the manifolds are nonisometric follows from a similar argument used in \cite[p.~179]{McR}. Briefly, each element $\gamma \in \pi_1(M_{1,\ell})$ is trivial under reduction modulo $\mathfrak{p}_1$ while there are infinitely many elements in $\pi_1(M_{\ell,1})$ with image that generates $C_{\mathfrak{p}_1}$. Consequently, these elements in $\pi_1(M_{\ell,1})$ with order $\ell$ image under modulo $\mathfrak{p}_1$ cannot be conjugate to any element in $\pi_1(M_{1,\ell})$ in $\Isom(\mathbb{H}^m)$. However, if $M_{1,\ell},M_{\ell,1}$ are isometric, by Mostow rigidity, $\pi_1(M_{1,\ell}),\pi_1(M_{\ell,1})$ are conjugate in $\Isom(\mathbb{H}^m)$, and so $M_{1,\ell},M_{\ell,1}$ are nonisometric.
\end{proof}

The proof for a nonstandard arithmetic hyperbolic orbifold $M=\Gamma\backslash\mathbb{H}^m$ is similar.
As in the standard arithmetic setting, there is an associated number field $k$ and an algebraic $k$--group $\mathbf{G}$ for which $\Gamma$ is commensurable with the group $\mathbf{G}(\mathcal{O}_k)$.
There is an infinite set of primes $S_0'$ of $\mathcal{O}_k$ such that for each $\mathfrak{p}\in S_0'$, the local group $\mathbf{G}(k_{\mathfrak{p}})$ is isomorphic to $\mathbf{SO}(V_{\mathfrak{p}},q_{\mathfrak{p}})$, where $(V_{\mathfrak{p}},q_{\mathfrak{p}})$ is a quadratic space over $k_{\mathfrak{p}}$. Restricting to primes in $S_0'$, the proof then follows as in the standard arithmetic case. For (a), we note that when $M$ is a closed arithmetic hyperbolic $m$--orbifold with $m \geq 4$, the field of definition of $M$ is not $\Q$ (see \cite[\S 6.4]{Witte}). 

The above method can be implemented for any finite volume, complete, hyperbolic $m$--orbifold when $m \geq 4$. 

\begin{thm}\label{thmA-Gen}
If $M$ is a complete, orientable, finite volume hyperbolic $m$--orbifold with $m \geq 4$, then the following holds:
\begin{itemize}
\item[(a)]
If the field of definition of $M$ is not $\Q$, then there exist finite, nonisometric covers $M',N'$ such that $M',N'$ are geometrically isospectral.
\item[(b)]
There exists a sequence $(M_j,N_j)$ of pairs of nonisometric finite covers of $M$ such that $M_j,N_j$ is geometrically equivalent and $\mathrm{Vol}(M_j)/\mathrm{Vol}(N_j)$ is unbounded as a function of $j$.
\end{itemize}
\end{thm}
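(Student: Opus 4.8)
The plan is to reduce the proof of Theorem~\ref{thmA-Gen} to the already-established machinery for standard arithmetic orbifolds, treating the standard and nonstandard cases uniformly via the local quadratic-space structure. First I would recall the classification of arithmetic hyperbolic lattices: for any complete, orientable, finite volume hyperbolic $m$--orbifold $M = \Gamma\backslash\mathbb{H}^m$ with $m \geq 4$, there is a number field $k$ and a connected semisimple $k$--group $\mathbf{G}$, with $\mathbf{G}(\R)$ locally isomorphic to $\mathbf{SO}_0(m,1)$, such that $\Gamma$ is commensurable with $\mathbf{G}(\Cal{O}_k)$. When $m$ is even, or $m$ is odd and $M$ is standard, $\mathbf{G} = \mathbf{SO}(q)$ for an admissible hyperbolic pair $(k,q)$ and Theorem~\ref{thmA} applies directly. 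When $m$ is odd and $M$ is nonstandard, $\mathbf{G}$ is an outer form coming from a quadratic form over a quaternion division algebra, but the key point is the observation already recorded in the excerpt: there is an infinite set $S_0'$ of primes $\Fr{p}$ of $\Cal{O}_k$ at which the quaternion algebra splits, so that $\mathbf{G}(k_\Fr{p}) \cong \mathbf{SO}(V_\Fr{p}, q_\Fr{p})$ for a quadratic space $(V_\Fr{p},q_\Fr{p})$ over $k_\Fr{p}$ of the correct dimension and Witt index.

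Next I would carry out the same two-step analysis of Sections~\ref{section:orders} and~\ref{sec:Step2} but purely locally, using only primes $\Fr{p}\in S_0'$ (intersected with the nondyadic good-reduction locus). For such $\Fr{p}$, the reduction $\rho_\Fr{p}(\mathbf{G}(\Cal{O}_k))$ sits in $SO(m+1,p^r)$ with index one or two, exactly as in Proposition~\ref{P:Bob} (strong approximation applies to the simply connected cover, and the spinor norm obstruction is $\mu_2$); and the image $H_\Fr{p}$ of any totally geodesic subgroup $\wt{\Lambda}$ is governed by the Bruhat--Tits computation of diagram~\eqref{E:Dia2}, since the totally geodesic submanifolds of $M$ are still cut out by quadratic subforms over $k$ whose localizations at $\Fr{p}\in S_0'$ embed in $(V_\Fr{p},q_\Fr{p})$. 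Hence the divisibility conclusions of Propositions~\ref{comp1} and~\ref{comp2} hold verbatim for these local groups, and Lemma~\ref{L:PrimeCor} produces for each such $\Fr{p}$ a cyclic subgroup $C_\Fr{p} < G_\Fr{p}$ of odd prime order $\ell_\Fr{p}$ meeting every $H_\Fr{p}$ trivially. From here part~(b) follows exactly as in the standard proof: either some $S_\ell$ is infinite, giving covers with volume ratio $\ell^j$, or one builds a sequence of single-prime covers with volume ratios $\ell_j\to\infty$ through distinct primes. For part~(a), the hypothesis that the field of definition $k$ is not $\Q$ lets one invoke the Cebotarev Density Theorem to find a rational prime $p$ with two primes $\Fr{p}_1,\Fr{p}_2 \in S_0'$ of the same residue degree, apply Lemma~\ref{L:PrimeCor} to each, and form the covers $M_{\ell,1}, M_{1,\ell}$; geometric isospectrality and nonisometry follow from the cyclic-cover multiplicity relation~\eqref{SpectrumRelationship} and the Mostow-rigidity conjugacy argument, respectively, just as in the standard case.

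I expect the main obstacle to be verifying that the Step~2 Bruhat--Tits computations genuinely transfer to the nonstandard case without modification. Concretely, one must check that for $\Fr{p}\in S_0'$ every totally geodesic subgroup of $\Gamma$ still gives rise to a parahoric $\Cal{H}_\Fr{p}$ of a quasi-split special orthogonal $k_\Fr{p}$--group whose reductive quotient appears among the types enumerated in Propositions~\ref{comp1} and~\ref{comp2}; this requires knowing that the $k$--form of a totally geodesic subgroup of a nonstandard $\mathbf{G}$ is again of absolutely special orthogonal type (an outer form or a form over a quaternion algebra that splits at $\Fr{p}\in S_0'$), so that its local index at $\Fr{p}$ lies on the same list of local Dynkin diagrams. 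Once that structural point is in place, the passage from the standard to the general statement is essentially bookkeeping, and the remaining arguments (strong approximation, Cauchy's theorem, Zsigmondy, Cebotarev, Mostow rigidity) are identical to those already given. I would therefore devote the bulk of the write-up to this transfer, and then simply say ``the remainder of the proof of~(a) and~(b) proceeds verbatim as in the standard arithmetic case, restricting throughout to primes in $S_0'$.''
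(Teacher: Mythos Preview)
Your proposal has a genuine gap at the very first step: you assume that every complete, orientable, finite volume hyperbolic $m$--orbifold $M$ with $m\ge 4$ is arithmetic, writing that $\Gamma$ is ``commensurable with $\mathbf{G}(\Cal{O}_k)$'' for some $k$--group $\mathbf{G}$. This is false. Theorem~\ref{thmA-Gen} is stated for \emph{all} finite volume hyperbolic $m$--orbifolds, not just arithmetic ones, and there are plenty of nonarithmetic examples (Gromov--Piatetski-Shapiro hybrids, for instance) to which your entire framework of admissible pairs, quaternion forms, and sets $S_0'$ of split primes simply does not apply. What you have outlined is essentially the paragraph the paper gives \emph{before} Theorem~\ref{thmA-Gen}, which completes the proof of Theorem~\ref{thmA} in the nonstandard arithmetic case; it is not a proof of Theorem~\ref{thmA-Gen}.

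The paper's actual argument is different in kind. For a general lattice $\Gamma<\mathbf{PSO}_0(m,1)$ one takes the Vinberg trace field $k$ (the field generated by matrix coefficients), which is always a number field, and the finitely generated ring $R\subset k$ generated by the entries of $\rho(\Gamma)$. One then reduces modulo prime ideals $\mathfrak{P}=R\mathfrak{p}$ for a cofinite set of $\mathfrak{p}$. The crucial replacement for arithmetic strong approximation is the Nori--Weisfeiler theorem for Zariski-dense subgroups: since $\rho(\Gamma)$ is Zariski dense in $\mathbf{PSO}_0(m,1)$, for infinitely many nondyadic $\mathfrak{p}$ the image $\rho_{\mathfrak{P}}(\Gamma)$ contains the commutator subgroup $\Omega^{(\pm)}(m+1;p^r)$ of $SO^{(\pm)}(m+1;p^r)$. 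From there the prime $\ell$ produced by Lemma~\ref{L:PrimeCor} is odd and divides $|\Omega|$, so $C_\mathfrak{p}$ sits inside $\rho_{\mathfrak{P}}(\Gamma)$, and the rest of the argument (Zsigmondy, Cebotarev for part~(a), the cyclic-cover multiplicity relation, Mostow rigidity) proceeds exactly as before. Your Bruhat--Tits transfer discussion is a red herring here: for general $\Gamma$, a totally geodesic subgroup $\Lambda$ lies in some $H_f\cap\Gamma$ with $H_f$ a block-orthogonal subgroup, so its reduction lands in the corresponding block-orthogonal subgroup over $\BB{F}_{p^r}$, and the divisibility constraints on $|H_\mathfrak{p}|$ follow without any parahoric analysis.
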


\begin{proof}
Given $M$ with $\Gamma = \pi_1(M)$, there exists an injective homomorphism $\rho\colon \Gamma \to \mathbf{PSO}_0(m,1)$ such that the field generated by the matrix coefficients is a number field $k$ (see \cite{Vinberg} or \cite[\S 4.1]{LongReid}); this field is the so-called field of definition. If $R$ is the $\mathcal{O}_k$--submodule of $k$ generated by the entries of $\rho(\Gamma)$, there is a cofinite subset of the set of prime ideals $\mathcal{P}$ of $\mathcal{O}_k$ such that $R/\mathfrak{P} \cong \mathcal{O}_k/\mathfrak{p} = \BB{F}_{p^r}$ for each $\mathfrak{p} \in \mathcal{P}$, where $\mathfrak{P} = R\mathfrak{p}$. Since $\rho(\Gamma) < \mathbf{PSO}_0(m,1)$ is Zariski dense, we can apply Nori--Weisfeiler strong approximation \cite{Nori,Weisfeiler}. When $m+1$ is odd (resp.\ even), there exists an infinite set of nondyadic primes $S_2 \su \mathcal{P}$ such that the image of $\rho_\mathfrak{P}(\Gamma)$ contains the commutator subgroup $\Omega(m+1; p^r)$ (resp.\ $\Omega^{\pm}(m+1; p^r)$) of $SO(m+1;p^r)$ (resp. $SO^{\pm}(m+1;p^r)$) for each $\mathfrak{P} \in S_2$ (see \cite[Thm.\ 5.3]{LongReid}). The argument now follows as in the previous case of standard arithmetic hyperbolic $m$--orbifolds.
\end{proof}

\medskip

\textbf{Remark 1. }\hypertarget{Rem4} Our use of Zsigmondy's Theorem was inspired by \cite{LongReid}, where they proved that any lattice $\Gamma < \B{SO}(n,1)$ contains hyperbolic elements with infinite order holonomy. In \cite{McR}, the use of Zsigmondy's theorem was replaced by a direct argument. Prasad--Rapinchuk \cite{PrasadRapinchuk} have general results on the existence of semisimple elements whose Zariski closure is dense in a maximal torus. It is possible to replace our elementary counting argument with an argument based on \cite{PrasadRapinchuk}, though one must still determine the possible images of subgroups associated with totally geodesic submanifolds as in \S \ref{sec:Step2}.




\begin{thebibliography}{BoHC}

\bibitem[Bor91]{B}A. Borel
\newblock\emph{Linear Algebraic Groups}.
\newblock Springer-Verlag, (1991).

\bibitem[LR10]{LongReid} D. D. Long and A. W. Reid
\newblock\emph{Eigenvalues of hyperbolic elements in Kleinian groups}.
\newblock In the tradition of Ahlfors-Bers., Contemp. Math. \textbf{510} ( 2010), 197--208.

\bibitem[MaR03]{MaR}C. Maclachlan and A. W. Reid
\newblock\emph{The Arithmetic of Hyperbolic 3--Manifolds}.
\newblock Springer-Verlag, (2003).

\bibitem[McR14]{McR} D. B. McReynolds and A. W. Reid
\newblock\emph{The genus spectrum of hyperbolic 3--manifolds}.
\newblock Math. Res. Lett. \textbf{21} (2014), 169--185.

\bibitem[Mey14]{Meyer}J. S. Meyer
\newblock\emph{Totally Geodesic Spectra of Arithmetic Hyperbolic Spaces}.
\newblock To appear in Trans. Amer. Math. Soc, \href{http://arxiv.org/abs/1408.2579v2}{\texttt{http://arxiv.org/abs/1408.2579v2}}.

\bibitem[Nor87]{Nori} M. Nori
\newblock \emph{On subgroups of $\mathrm{GL}_n(\mathbb{F}_p)$}.
\newblock Invent. Math. \textbf{88} (1987), no. 2, 257--275.

\bibitem[Ono66]{Ono}T. Ono
\newblock\emph{On algebraic groups and discontinuous groups}.
\newblock Nagoya Math. J. \textbf{27} (1966) 279--322.

\bibitem[PR94]{PR}V. Platonov and A. Rapinchuk
\newblock\emph{Algebraic groups and number theory}.
\newblock Academic Press, (1994).

\bibitem[PR03]{PrasadRapinchuk} G. Prasad and A. Rapinchuk
\newblock\emph{Existence of irreducible $\mathbb{R}$-regular elements in Zariski-dense subgroups}.
\newblock Math. Res. Lett. \textbf{10} (2003), no. 1, 21--32.

\bibitem[Rap14]{Ra}A. Rapinchuk
\newblock\emph{Strong approximation for algebraic groups. Thin groups and superstrong approximation}.
\newblock Math. Sci. Res. Inst. Publ., \textbf{61} (2014), 269--298.

\bibitem[Sun85]{S}T. Sunada
\newblock\emph{Riemann coverings and isospectral manifolds}.
\newblock Ann. Math. \textbf{121} (1985), 169--186.

\bibitem[Tit66]{T1}J. Tits
\newblock\emph{Classification of algebraic semisimple groups}.
\newblock Proc. Summer Inst. on Algebraic Groups and Discontinuous Groups (Boulder, 1965), Proc. Sympos. Pure Math., \textbf{9}, Amer. Math Soc., Providence, R.I., (1966), 33--62.

\bibitem[Tit77]{T2}J. Tits
\newblock\emph{Reductive groups over local fields}.
\newblock Automorphic forms, representations, and $L$-functions (A. Borel, W. Casselman, Eds.), v.1., 29--69 (1977).

\bibitem[Vin72]{Vinberg}
E.~B.~Vinberg \emph{Rings of definition of dense subgroups of semisimple linear groups}. Math. USSR Izvest. \textbf{5} (1972), 45--55.

\bibitem[Wei84]{Weisfeiler} B. Weisfeiler
\newblock\emph{Strong approximation for Zariski-dense subgroups of semisimple algebraic groups}.
\newblock Ann. of Math. (2) \textbf{120} (1984), no. 2, 271--315.

\bibitem[Wit15]{Witte}
D.~Witte-Morris \emph{Introduction to Arithmetic Groups}. Deductive Press, (2015).


\bibitem[Zig92]{Zsig}K. Zsigmondy
\newblock\emph{Zur Theorie der Potenzreste}.
\newblock Monatsh. F\"{u}r Math. Phys. \textbf{3} (1892), 265--284.

\end{thebibliography}
\end{document}